\documentclass[12pt]{amsart}
\usepackage{latexsym}
\usepackage{amsmath,amsfonts,amssymb,amsthm}
\usepackage{graphicx}
\usepackage{color}
\usepackage{xypic}
\usepackage{marvosym}
\input xy
\xyoption{all}
\textheight22.0cm
\textwidth15cm
\topskip2ex
\topmargin0cm
\oddsidemargin1cm
\parindent0cm 
\parskip2.0ex plus0.5ex
\widowpenalty=10000

\newcommand{\cO}{{\mathcal O}}

\renewcommand{\cD}{{\mathcal D}}

\newcommand{\eff}{\Eff}
\newcommand{\nef}{\Nef}
\DeclareMathOperator{\mov}{mov}
\DeclareMathOperator{\fix}{fix}

\newcommand{\bQ}{\mathbb Q}
\newcommand{\bZ}{\mathbb Z}
\newcommand{\bC}{\mathbb C}

\newcommand{\bP}{\mathbb P}

\newcommand{\toric}[1]{\T\V({#1})}  %
\DeclareMathOperator{\tail}{tail}
\DeclareMathOperator{\head}{head}

\newcommand{\kG}{\Gamma} %

\DeclareMathOperator{\faces}{faces}
\newcommand{\kk}{\bC} %
\def\pDiv{\mathcal{D}} %
\DeclareMathOperator{\CDiv}{CaDiv}
\DeclareMathOperator{\TT}{eq}
\newcommand{\finer}{\leq}
\newcommand{\PD}{\Delta}
\DeclareMathOperator{\pwl}{min}
\newcommand{\altcone}{\beta} %
\DeclareMathOperator{\stable}{st}
\DeclareMathOperator{\rank}{rk}
\newcommand{\kyz}{\hspace{-0.2em}}
\DeclareMathOperator{\cp}{comp}

\definecolor{tomato}{rgb}{1.0,.388,.278}      %
\definecolor{DarkSalmon}{rgb}{.914,.588,.478} %
\definecolor{SaddleBrown}{rgb}{0.545,.271,.075}      %

\newcommand{\PP}{\mathbb P}
\newcommand{\Q}{\mathbb Q}

\newcommand{\T}{\mathbb T}
\newcommand{\V}{\mathbb V}
\newcommand{\Z}{\mathbb Z}

\newcommand{\CD}{{\mathcal D}}

\newcommand{\CF}{{\mathcal F}}

\newcommand{\CO}{{\mathcal O}}

\DeclareMathOperator{\Spec}{Spec}

\newcommand{\conv}{\operatorname{conv}} %
\DeclareMathOperator{\innt}{int}

\newcommand{\disjointcup}{\bigsqcup}
\renewcommand{\div}{{\rm div}}
\DeclareMathOperator{\Eff}{Eff}

\DeclareMathOperator{\Nef}{Nef}

\DeclareMathOperator{\Mov}{Mov}

\DeclareMathOperator{\Cox}{Cox}
\DeclareMathOperator{\Pic}{Pic}
\DeclareMathOperator{\PDiv}{PDiv}
\DeclareMathOperator{\CaDiv}{CaDiv}
\DeclareMathOperator{\Div}{Div}
\DeclareMathOperator{\Cl}{Cl}
\DeclareMathOperator{\cl}{cl}

\DeclareMathOperator{\orb}{orb}

\DeclareMathOperator{\id}{id}

\DeclareMathOperator{\mult}{mult}
\newcommand{\dual}{^{\scriptscriptstyle\vee}}
\DeclareMathOperator{\Pol}{Pol}
\DeclareMathOperator{\Hom}{Hom}

\DeclareMathOperator{\gHom}{Hom}

\newcommand{\til}[1]{{\widetilde{#1}}}
\newcommand{\surj}{\rightarrow\hspace{-0.8em}\rightarrow}

\newcommand{\ko}{\overline}
\newcommand{\ku}{\underline}

\newcommand{\kss}{\scriptscriptstyle}

\newcommand{\kbb}{{\kss \bullet}}
\renewcommand{\iff}{\Leftrightarrow}

\newcommand{\kst}{\,|\;}
\newcommand{\kSt}{\,\big|\;}

\newtheorem{theorem}{Theorem}

\newtheorem{lemma}[theorem]{Lemma}

\newtheorem{corollary}[theorem]{Corollary}

\theoremstyle{definition}
\newtheorem{definition}[theorem]{Definition}

\theoremstyle{remark}
\newtheorem*{remark}{Remark}

\numberwithin{equation}{section}
\definecolor{cpriv}{rgb}{0.4,0.1,0.2}

\definecolor{ckq}{rgb}{0.9,0.15,0.03}

\definecolor{ckn}{rgb}{0.1,0.55,0.03}

\definecolor{cJarek}{rgb}{0.1,0.45,0.03}

\newcommand{\kkk}[1]{}

\begin{document}
\title[P-Divisors of Cox Rings]{
P-Divisors of Cox Rings
}
\author[K.~Altmann]{Klaus Altmann%
}
\address{Institut f\"ur Mathematik,
FU Berlin,
Arnimalle 3, 
14195 Berlin, 
Germany}
\email{Altmann@math.fu-berlin.de
}
\author[J.~A.~Wi{\'s}niewski]{Jaros{\l}aw A.~Wi\'sniewski}%
\address{Instytut Matematyki UW, Banacha 2, 02-097 Warszawa, Poland}
\email{J.Wisniewski@mimuw.edu.pl} 

\thanks{This project was conceived when the second author visited Freie
  Universit\"at in Berlin supported by the Alexander von Humboldt
  Foundation and completed when both authors visited the Mathematical
  Sciences Research Institute in Berkeley. The second author was also
  supported by Polish MNiSzW grant N N201 2653 33. We thank all
  supporting institutions.}

\subjclass[2000]{14C20, 14E30, 14J45, 14L30, 14M25} \keywords{Cox ring, torus action}

\date{}
\maketitle
\begin{abstract}
  The Cox ring of a so-called Mori Dream Space (MDS) is finitely
  generated and it is graded over the divisor class group. Hence the
  spectrum of the Cox ring comes with an action of an algebraic torus
  whose GIT quotient is the variety in question. We present the
  associated description of this Cox ring as a polyhedral divisor in
  the sense of \cite{toral}.  
Via the shape of its polyhedral coefficients,
it connects the equivariant structure of the Cox ring
with the world of stable loci and stable multiplicities of linear systems.
  \vspace{2ex}
\end{abstract}
\section{Introduction}
\label{introC2P}
Let $Z$ be a $\bQ$-factorial projective variety defined over the
field of complex numbers such that its divisor class group $\Cl(Z)$
is a lattice that is a free abelian, finitely generated group. We
consider the Cox ring of $Z$
$$\Cox(Z)=\bigoplus_{D\in\Cl(Z)}\kG(Z,\cO(D))$$
with multiplicative structure defined by a choice of divisors
whose classes form a basis of $\Cl(Z)$. Our standing assumption in
this paper is the finite generation of the $\bC$-algebra $\Cox(Z)$. We
will call such $Z$ a Mori Dream Space (or MDS) as it was baptized by
Hu and Keel in \cite{MDS}.  We note that a somewhat more general
definition of MDS, without the $\bQ$ factoriality assumption, was
developed by Artebani, Hausen and Laface, see
\cite[Thm.~2.3]{K3}. However, $\bQ$-factoriality of $Z$ is a part of
our set up in the present paper.

The $\Cl(Z)$-grading of $\Cox(Z)$ yields an algebraic action of the
associated torus $\Hom_\bZ(\Cl(Z),\bC^*)\cong
(\bC^*)^{\rank(\Cl(Z))}$ on the affine variety $\Spec(\Cox(Z))$. The
variety $Z$ is a GIT quotient of $\Spec(\Cox(Z))$ by the action of
this torus. More precisely, a choice of an ample divisor on $Z$
determines an open subset of $\Spec(\Cox((Z))$ such that $Z$ is a good
geometric quotient of this set, see \cite[Prop.~2.9]{MDS}.

Affine varieties with an algebraic torus action were dealt with by
Altmann and Hausen, \cite{toral}, who introduced the notion of
polyhedral divisors, or p-divisors. Every normal, affine variety 
$X$ with an
algebraic torus action can be described in terms of a polyhedral
divisor $\pDiv=\sum_i\Delta_i\otimes D_i$
over its Chow quotient $Y$, \cite[Thm.~3,4]{toral}. 
Alternatively, such a p-divisor can be interpreted as a convex,
fanwise linear (i.e.\ piecewise linear and homogeneous, defined on a
cone) map from the character lattice $M$ of the torus to
$\CaDiv_\Q(Y)$. See (\ref{pDivDef}) for more details.  Note that, by
abuse of notation, we use the word ``Chow quotient'' for the
normalization of the distinguished component of the inverse limit of
the GIT quotients of $X$, cf.~\cite[Sect.~6]{toral}, \cite{Hu}.

We apply this formalism to treat the case of $X=\Spec(\Cox(Z))$ for
$Z$ as above. Although, in general, the structure of the Chow quotient
$Y$ is rather obscure, our main result, Theorem~\ref{th-mainRes},
asserts that the associated p-divisor is supported on a finite number
of exceptional divisors $D_i$ with polyhedral coefficients $\Delta_i$
described
clearly in terms of stabilized multiplicities
with respect to these divisors:
$$
\;\Delta_i=\{C\in\Cl^*(Z)_\Q\kst C\geq -\mult^{\stable}_{D_i}\}
+ \mbox{\rm shift}.
$$
Thus, polyhedral divisors provide an alternative view of the
stabilized base point loci and the asymptotic order of the vanishing of
linear series on $Z$, as defined by Ein, Lazarsfeld, Musta{\c t}{\v a}, 
Nakamaye and Popa, \cite{asymptotic}.

The composition of the p-divisor associated to $\Cox(Z)$,
  treated as a fanwise linear map
  $\,\pDiv:M_\bQ=\Cl_\bQ(Z)\supset\Eff(Z)\to\CaDiv_\Q(Y)$, with the
  divisor class map $\CaDiv_\bQ(Y)\to \Pic_\bQ(Y)$ (dividing by
  $\bQ$-principal divisors), maps the cone of effective divisors on
  $Z$, denoted by $\Eff(Z)$, to the cone $\Nef(Y)$ of nef (in this
  case also semiample) divisors on $Y$. 
In Corollary~\ref{Cox-pwl-map} we show that it is a composition of two
other maps
  $\Cl_\bQ(Z)\supset\Eff(Z)\to\Cl_\Q(Z)=\Pic_\Q(Z)\to\Pic_\Q(Y)$.
First, one performs a retraction of $\Eff(Z)$ to the cone of movable
divisors $\Mov(Z)$ which is a union of cones $\Nef(Z_i)$ for $Z_i$
being 
different GIT quotients of $\Cox(Z)$. Second, the chambers $\Nef(Z_i)$
are mapped to faces of $\Nef(Y)$ by
pulling the divisors back along the natural morphisms $Y\rightarrow Z_i$.

Our starting point, however, is the toric case where both the Chow
quotient of $\Cox(Z)$ and the p-divisor can be described 
explicitly. We discuss this in Section \ref{tordown} right after the
introductory Section~\ref{pDiv} where we recall the language of
p-divisors. The main toric result, Theorem \ref{pToriCoxTheorem}, is
obtained by explicit methods. In the subsequent Section~\ref{dualpol},
we rephrase it by using dual polyhedra and the
associated fanwise linear functions. These easy observations
lead us to the relation to
multiplicities of divisors in base point loci of linear systems
forming the core of the proof of Theorem \ref{th-mainRes}.
This is contained
in Section \ref{MDS}, where we also recall the basic information about
MDS. 

Finally, in Section \ref{surf}, we discuss the 
surface case and provide some further examples.
If $Z=S$ with $\dim S=2$, then
the Chow limit $Y$ coincides with $S$. So the
p-divisor defines a retraction $\Eff(S)$ to $\Nef(S)$ 
reflecting the Zariski decomposition on $S$.
It is linear on the Zariski chambers, as defined in \cite{bks}. 
The coefficients of the p-divisor on an MDS surface are presented in 
Theorem~\ref{pToriCoxTheoremSurf}.
For a del Pezzo surface $S$ they look particularly nice:

{\bf Corollary~\ref{pToriCoxCor}.}
{\em
If $S$ is a del Pezzo with $E_i\subseteq S$ denoting their exceptional curves,
then the p-divisor encoding $\Cox(S)$ equals
$\;\pDiv=\id_{\Cl(S)}+\sum_i \big(\ko{0E_i}+\Nef(S)\big)\otimes E_i$.
}

\bigskip
\section{The language of p-divisors}\label{pDiv}%

\subsection{Definition of p-divisors}\label{pDivDef}
We start recalling the basic notions of \cite{toral}.  Let $T$ be an
affine torus over a field of complex numbers $\kk$.  It gives rise to
the mutually dual free abelian groups, or lattices,
$M:=\gHom_{\mbox{\tiny algGrp}}(T,\kk^\ast)$ and
$N:=\gHom_{\mbox{\tiny algGrp}}(\kk^\ast, T)$. The pairing of dual
lattices (or, also, dual vector spaces) will be denoted by $\langle\ \
, \ \rangle$.  Via $T=\Spec\kk[M]=N\otimes_\Z\kk^*$, the torus can be
recovered from these lattices.  Denote by $M_\Q:= M\otimes_\Z\Q$ and
$N_\Q:= N\otimes_\Z\Q$ the corresponding vector spaces over $\Q$ (the
same notation will be used whenever we extend a lattice to a
$\Q$-vector space).

\begin{definition}
\label{def-pol}
If $\sigma\subseteq N_\Q$ is a polyhedral cone, then we denote by
$\Pol(N_\Q,\sigma)$ the Grothendieck group of the semigroup
$$ 
\Pol^+(N_\Q,\sigma):=\{\Delta\subseteq N_\Q\kst
\Delta = \sigma + [\mbox{compact polytope}]\}
$$
with respect to Minkowski addition. Via $a\mapsto a+\sigma$, the latter
contains $N_\Q$.
Moreover,
$\tail(\Delta):=\sigma$ is called the tail cone of the
elements of $\Pol(N_\Q,\sigma)$.
\vspace{-2ex}
\end{definition}

Let $Y$ be a normal and semiprojective (i.e.\ $Y\to Y_0$ is projective
over an affine $Y_0$) $\kk$-variety. By $\CaDiv(Y)$ and $\Div(Y)$ we
denote the group of Cartier and Weil divisors on $Y$ with
linear equivalence groups by $\Pic(Y)$ and $\Cl(Y)$,
respectively.  A $\Q$-Cartier divisor on $Y$ is called {\em
  semiample} if a multiple of it becomes base point free.

\begin{definition}
\label{def-pp}
An element $\pDiv=\sum_i \Delta_i\otimes D_i\in 
\Pol(N_\Q,\sigma)\otimes_\Z \CaDiv(Y)$ with effective divisors $D_i$
and $\Delta_i\in\Pol^+(N_\Q,\sigma)$
is called a {\em polyhedral divisor} on $(Y,N)$ with tail cone $\sigma$.
Moreover, it is called {\em semiample} if
the evaluations
$\pDiv(u):=\sum_i \min\langle \Delta_i,u\rangle D_i$
are semiample for $u\in\sigma^\vee\cap M$
and big for $u\in\innt\sigma^\vee\cap M$.

\vspace{-2ex}
\end{definition}
Note that the membership $u\in\sigma^\vee:=\{u\in
  M_\Q\kst \langle \sigma, u\rangle \geq 0\}$ guarantees that
  $\min\langle \Delta_i,u\rangle > -\infty$ and therefore $\pDiv$
  defines a function $\sigma^\vee\to\CaDiv_\Q(Y)$ which we will denote
  by the same name. Sometimes, by abuse, we will refer to $\pDiv$ as
  a function defined on the whole lattice $M$ or space $M_\Q$. In such
  a case, for $u\not\in\sigma^\vee$, we have $\min\langle
  \Delta_i,u\rangle = -\infty$, and thus, although $-\infty$ as a
  Cartier divisor coefficient does not make sense, we get as a reasonable
  conclusion that $\kG(Y,\CO_Y(\pDiv(u)))=0$.

The common tail cone $\sigma$ of the coefficients $\Delta_i$
will be denoted by
$\tail(\pDiv)$.
Semiample polyhedral divisors will be called {\em p-divisors} for short.
Their positivity assumptions imply that
$\pDiv(u)+\pDiv(u')\leq \pDiv(u+u')$; hence
$\CO_Y(\pDiv):= \bigoplus_{u\in\sigma^\vee\cap M} \CO_Y(\pDiv(u))$
becomes a sheaf of rings,
and we can define $X:=X(\pDiv):= \Spec \kG(Y,\CO(\pDiv))$ over $Y_0$.
\\[1ex]
This space does not change if $\pDiv$ is pulled back via a
birational modification $Y'\to Y$ or if $\pDiv$ is altered by a
polyhedral {\em principal} divisor -- the latter means an image under
$N\otimes_\Z\kk(Y)^*\to \Pol(N_\Q,\sigma)\otimes_\Z \CaDiv(Y)$.
P-divisors that differ by (chains of)
those operations only are called equivalent.
Note that this implies that one can always ask for a smooth $Y$.

\begin{theorem}[\cite{toral}, Theorems (3.1), (3.4); Corollary (8.12)] 
The map $\pDiv\mapsto X(\pDiv)$ yields a bijection between
equivalence classes of p-divisors and normal, affine $\kk$-varieties with an
effective $T$-action.
\end{theorem}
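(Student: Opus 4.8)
The plan is to establish the two directions of the correspondence separately and then check that the resulting assignments are mutually inverse modulo the stated equivalence relation.

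For the forward direction $\pDiv\mapsto X(\pDiv)$ I would start from the sheaf $\CO_Y(\pDiv)=\bigoplus_{u\in\sigma^\vee\cap M}\CO_Y(\pDiv(u))$ already introduced. The superadditivity $\pDiv(u)+\pDiv(u')\leq\pDiv(u+u')$, forced by convexity of each coefficient function $u\mapsto\min\langle\Delta_i,u\rangle$, provides multiplication maps $\CO_Y(\pDiv(u))\otimes\CO_Y(\pDiv(u'))\to\CO_Y(\pDiv(u+u'))$, so $\CO_Y(\pDiv)$ is a sheaf of $M$-graded $\CO_{Y_0}$-algebras. Setting $X(\pDiv)=\Spec\Gamma(Y,\CO_Y(\pDiv))$, the $M$-grading induces a $T$-action, and I would verify three points: that $\Gamma(Y,\CO_Y(\pDiv))$ is a finitely generated $\kk$-algebra, where semiprojectivity of $Y$ together with semiampleness of the $\pDiv(u)$ enters; that it is integrally closed, since each graded piece $\Gamma(Y,\CO_Y(\pDiv(u)))=\{f\in\kk(Y)\kst\div f+\pDiv(u)\geq 0\}$ is cut out by valuations and $Y$ is normal, so a standard valuation-theoretic argument makes the total ring normal; and that the $T$-action is effective, which is exactly where bigness of $\pDiv(u)$ on $\innt\sigma^\vee$ is used, as it forces the generic fibre of $X(\pDiv)\to Y$ to be a full $\dim T$-dimensional orbit.

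For the reverse direction I would decompose a normal affine $X=\Spec A$ with effective $T$-action into isotypic components $A=\bigoplus_u A_u$; the support is a cone $\sigma^\vee$, and $A_0$ is the invariant ring with fraction field $\kk(X)^T$. Each $A_u$ is a rank-one $A_0$-submodule of $\kk(X)^T$, so after fixing a normal model $Y$ with $\kk(Y)=\kk(X)^T$ we may write $A_u=\Gamma(Y,\CO_Y(\pDiv(u)))$ for a $\Q$-Cartier divisor $\pDiv(u)$. Subadditivity of the $A_u$ under multiplication translates precisely into concavity $\pDiv(u)+\pDiv(u')\leq\pDiv(u+u')$, so $u\mapsto\pDiv(u)$ is concave and fanwise linear. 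Applying the support-function dictionary coefficient by coefficient — a concave, positively homogeneous, piecewise-linear function on $\sigma^\vee$ equals $u\mapsto\min\langle\Delta,u\rangle$ for a unique $\Delta\in\Pol^+(N_\Q,\sigma)$ — recovers the polyhedral coefficients $\Delta_i$, and the semiample/big conditions hold because the $A_u$ are honest global sections on the affine $X$.

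The main obstacle is the geometric input of this reverse direction: producing a single base $Y$ on which all the $\pDiv(u)$ live simultaneously and are genuinely Cartier. This is the role of the Chow quotient — the normalization of the distinguished component of the inverse limit of the GIT quotients of $X$ — and one must show that the divisorial data extracted from the $A_u$ descend to actual divisors after a suitable birational modification. The delicate part is that fanwise linearity is not automatic on an arbitrary model: only after refining $Y$ so that the locus where the linear pieces of $u\mapsto\pDiv(u)$ change becomes divisorial do the coefficients collapse into finitely many polyhedra $\Delta_i$. Finally, to upgrade the two constructions to a bijection I would verify they are mutually inverse up to equivalence: pulling $\pDiv$ back along a birational $Y'\to Y$ or altering it by a principal polyhedral divisor leaves $\Gamma(Y,\CO_Y(\pDiv))$ unchanged, and conversely the reconstruction is independent of the choice of model and of the representatives for the $A_u$ precisely modulo these operations.
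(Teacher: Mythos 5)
First, a point of order: the paper does not prove this statement at all --- it is imported verbatim from Altmann--Hausen \cite{toral} (Theorems 3.1 and 3.4 for the two directions, Corollary 8.12 for the bijection up to equivalence), so there is no in-paper argument to compare yours against. Judged on its own terms, your outline reproduces the architecture of the Altmann--Hausen proof faithfully: the forward direction via $\Spec$ of the $M$-graded section ring, with superadditivity giving the multiplication, semiampleness plus semiprojectivity of $Y$ giving finite generation, a valuation-theoretic argument giving normality, and bigness on $\innt\sigma^\vee$ controlling the dimension (hence effectiveness of the action); the reverse direction via the isotypic decomposition $A=\bigoplus_u A_u$ and the identification of each $A_u$ with the sections of a $\Q$-divisor on a common birational model.

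Two caveats. There is a small inaccuracy: $A_0$ need not have fraction field $\kk(X)^T$ --- for $\A^2$ with the diagonal weight-$(1,1)$ action of $\kk^*$ one has $A_0=\kk$ while $\kk(X)^T=\kk(x/y)$. The correct picture is that $Y$ has function field $\kk(X)^T$ and is \emph{projective over} $Y_0=\Spec A_0$ (this is exactly where the semiprojectivity hypothesis comes from), and each $A_u$ sits inside the one-dimensional $\kk(X)^T$-vector space $\kk(X)_u$ rather than inside $\kk(X)^T$ itself. More substantially, the two steps you yourself single out as ``the main obstacle'' --- producing a single semiprojective model on which all the $\pDiv(u)$ are simultaneously Cartier and the evaluation map becomes fanwise linear with only finitely many polyhedral coefficients (this is where the limit over GIT quotients enters), and verifying that the two constructions are mutually inverse precisely modulo pullback along birational modifications and polyhedral principal divisors --- are the actual technical content of \cite{toral}, and your text names them without carrying them out. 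So what you have is a correct road map rather than a proof; for the purposes of this paper that is appropriate, since the result is used here strictly as a black box.
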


\begin{remark}
The $T$-action on $X$ corresponds to the $M$-valued grading of 
$\kG(Y,\CO(\pDiv))$. In this context, $\tail(\pDiv)^\vee$ becomes the cone
generated by the weights.
Note also that the knowledge of 
$\pDiv\in \Pol(N_\Q,\sigma)\otimes_\Z \CaDiv(Y)$ is equivalent
to the knowledge of $\pDiv$ as 
the above
fanwise linear (cf.\ (\ref{pwlinear})) function
$\sigma\dual\cap M\to \CaDiv(Y)$, $u\mapsto \pDiv(u)$.
\end{remark}

\subsection{Morphisms between p-divisors}\label{pDivFunct}
The construction of $X(\pDiv)$ is functorial:
Up to the above mentioned equivalences of p-divisors,
a map $(Y',N',\pDiv')\to(Y,N,\pDiv)$ consists of 
a morphism
$\psi:Y' \to Y$ such that the support $\bigcup_i D_i$ of $\pDiv$
does not contain $\psi(Y')$ and
a linear map $F: N'\to N$  with
$$ 
\sum_i \big(F(\Delta'_i)+\tail\pDiv\big) \otimes D'_i 
=: F_\ast(\pDiv') \;\subseteq\;
\psi^\ast(\pDiv):=
\sum_i \Delta_i\otimes \psi^\ast(D_i)
\vspace{-1ex}
$$
inside $\Pol(N_\Q,\tail\pDiv)\otimes_\Z\CDiv(Y')$.
The inclusion is understood as a relation between the coefficients
of the same divisors. In particular, we ask for
$F(\tail\pDiv')\subseteq \tail\pDiv$.

\begin{theorem}[\cite{toral}, Corollary (8.14)]
A map $(Y',N',\pDiv')\to(Y,N,\pDiv)$ with dominant $\psi:Y' \to Y$
gives rise to an
equivariant, dominant map
$X(\pDiv')\to X(\pDiv)$, and, eventually,
this leads to an equivalence of categories.
\vspace{-2ex}
\end{theorem}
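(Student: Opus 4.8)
The plan is to build the functor explicitly and to read off its three asserted properties from the defining inclusion of p-divisors, deferring the full equivalence to the classification bijection of the previous theorem. First I would dualize the linear map, setting $F^\ast:M\to M'$. The condition $F(\tail\pDiv')\subseteq\tail\pDiv$ dualizes to $F^\ast(\sigma^\vee)\subseteq(\sigma')^\vee$, where $\sigma=\tail\pDiv$ and $\sigma'=\tail\pDiv'$; thus $F^\ast$ carries the weight cone of $X(\pDiv)$ into that of $X(\pDiv')$. The heart of the construction is to evaluate the defining inclusion $F_\ast(\pDiv')\subseteq\psi^\ast(\pDiv)$ at a lattice point $u\in\sigma^\vee\cap M$. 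On the one hand, since $\min\langle\sigma,u\rangle=0$ and $\min\langle F(\Delta'_i),u\rangle=\min\langle\Delta'_i,F^\ast u\rangle$, the left side evaluates to $\pDiv'(F^\ast u)$. On the other hand, the right side is exactly $\psi^\ast(\pDiv(u))$. Because an inclusion of polyhedral coefficients reverses the minima, the evaluation yields the divisor inequality $\pDiv'(F^\ast u)\geq\psi^\ast(\pDiv(u))$ on $Y'$.

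This inequality is the key input. It gives a sheaf inclusion $\CO_{Y'}(\psi^\ast\pDiv(u))\hookrightarrow\CO_{Y'}(\pDiv'(F^\ast u))$, and composing the pullback of sections along $\psi$ (well defined since $\psi(Y')\not\subseteq\supp\pDiv$) with this inclusion produces, for each $u$, a linear map
$$
\kG\big(Y,\CO_Y(\pDiv(u))\big)\;\xrightarrow{\ \psi^\ast\ }\;\kG\big(Y',\CO_{Y'}(\psi^\ast\pDiv(u))\big)\;\hookrightarrow\;\kG\big(Y',\CO_{Y'}(\pDiv'(F^\ast u))\big).
$$
Summing over $u\in\sigma^\vee\cap M$ assembles these into a homomorphism $\kG(Y,\CO(\pDiv))\to\kG(Y',\CO(\pDiv'))$ that is graded along $F^\ast$. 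I would then check it is a ring map: multiplicativity follows because $\psi^\ast$ is multiplicative and the section inclusions are compatible with the superadditivity $\pDiv'(F^\ast u)+\pDiv'(F^\ast u')\leq\pDiv'(F^\ast(u+u'))$ that underlies the product of sections. Taking $\Spec$ reverses the arrow to $X(\pDiv')\to X(\pDiv)$; it is equivariant for the torus homomorphism $T'\to T$ induced by $F$ precisely because the ring map is $F^\ast$-graded, and it is dominant because $\psi$ being dominant makes $\psi^\ast$ injective, whence each degreewise map, and so the whole homomorphism of (integral) coordinate rings, is injective.

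The remaining assertion, that this is an equivalence of categories, is where the real work lies, and I would treat it as the main obstacle rather than as a corollary of the above. Essential surjectivity on objects is already supplied by the classification bijection of the previous theorem, so the content is full faithfulness: every equivariant dominant morphism $X(\pDiv')\to X(\pDiv)$ must arise from a map $(\psi,F)$, unique up to the equivalences of p-divisors (birational pullback on $Y$ and principal polyhedral divisors). Recovering $F$ is the easy half, as it is forced by the induced homomorphism of tori. The genuinely delicate step is reconstructing $\psi:Y'\to Y$: since $Y$ is characterized only as the normalized Chow quotient, one must produce $\psi$ functorially from the quotient construction applied to the given morphism, verify that the recovered pair satisfies the coefficientwise inclusion $F_\ast(\pDiv')\subseteq\psi^\ast(\pDiv)$, and confirm that two choices differ only by the allowed equivalences. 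I expect this reconstruction over the Chow quotients to be the crux; it is exactly the content of \cite[Corollary~(8.14)]{toral}, on which I would rely for the precise compatibilities.
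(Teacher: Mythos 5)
This statement is quoted verbatim from \cite{toral}, Corollary (8.14); the paper offers no proof of it, only the citation. Your construction of the induced morphism is correct and is essentially the standard one from that source: evaluating the coefficientwise inclusion $F_\ast(\pDiv')\subseteq\psi^\ast(\pDiv)$ at $u\in\tail(\pDiv)^\vee\cap M$ does give $\pDiv'(F^\ast u)\geq\psi^\ast(\pDiv(u))$ (inclusion of polyhedra reverses the minima, and the $\tail\pDiv$ summand contributes $0$), and the resulting $F^\ast$-graded, injective ring homomorphism yields the equivariant dominant map on $\Spec$'s. You are also right that the genuine content is the equivalence of categories --- in particular full faithfulness and the reconstruction of $\psi$ over the Chow quotients --- and deferring that to \cite{toral} is exactly what the paper itself does, so nothing is missing relative to the paper's treatment.
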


\subsection{p-divisors encode toric degenerations}\label{pDivBed}
The representation or encoding 
of a multigraded algebra as a p-divisor has many
advantages. First, while one misses direct information about 
generators and syzygies,
one should notice that this construction, however, entails
being of finite type. This is based on the fact that only semiample divisors
are used to produce the homogeneous parts of the algebra.
\\[1ex]
However, the main advantage of a p-divisor is that it is possible to read off
equivariant and geometric properties of the associated
affine $T$-variety $X$. This becomes possible because $X$ is the contraction
of $\til{X}:=\til{X}(\pDiv):= \Spec_Y \CO(\pDiv)$, and this space
is a degenerate toric fibration over $Y$. 
That is, there is a flat map $\til{X}\to Y$ with the
general fiber being the toric variety
$\toric{\tail(\pDiv),N}:=\Spec\kk[\tail(\pDiv)^\vee\cap M]$.
Moreover,
the divisors $D_i$ and their
polyhedral coefficients $\Delta_i$ provide the information
about the location and the quality of the degeneration, respectively:
$$ 
\xymatrix@R3ex@C5em{
\til{X} \ar[r]^{}  \ar[d]_{} &
X  \ar[d]^{} \\
Y \ar[r]^{} &
Y_0
}
$$
Special fibers over $y\in Y$ can be reducible; their components are
in a one-to-one correspondence with the vertices of
the polyhedron
$\Delta_y:=\sum_{D_i\ni y} \Delta_i$.
\\[1ex]
Thus, also the configuration of $T$-orbits and their closures is 
directly encoded
in the presentation of $X$ as a polyhedral divisor
$\pDiv$.
The orbits in $\til{X}$ correspond to pairs $(y,F)$ with $y\in Y$
and faces $F\leq \Delta_y$.
Moreover, as is it is known from the toric case,
mutual inclusions among orbit closures correspond to opposite inclusions
of the corresponding faces.
The orbit structure of $X$ may be obtained from that of $\til{X}$ by
keeping track of when certain orbits from $\til{X}$ will be identified in $X$.
This happens in relation to the different contractions of $Y$ provided by the
semiample divisors $\CD(u)$.
\\[1ex]
As an example of how to use this information, see 
in \cite{Hausen} Hausen's description of those
open subsets $U\subseteq X$ providing a complete quotient $U/T$.

\section{The toric situation}\label{tordown}

\subsection{Restriction to subtorus actions}\label{toricDown}
If $T\subseteq(\kk^*)^n$ occurs as a subtorus induced by a surjective
map $\deg:\Z^n\surj M$ (corresponding to the choice of 
degrees $\deg x_i\in M$),
then every affine toric variety $\toric{\delta}$ with $\delta\subseteq\Q^n$ 
inherits a $T$-action.  
By \cite[\S11]{toral}, the associated p-divisor $\pDiv(\delta)$
can be obtained as follows:
Defining $M_Y:=\ker(\deg)$, we have two mutually dual
exact sequences
$$
\xymatrix@R=4.5ex@C=2.2em{
0\ar[r] &
N \ar[rr]^-{i} &&
\Z^n \ar[rr]^\pi^-{} 
&&
N_Y \ar[r] 
\ar@/^1.1pc/[ll]_-s^-{} 
&
0\\
0 &
M \ar[l] &&
\Z^{n} \ar[ll]_-{\deg}^-{} \ar@/_1.1pc/[rr]^{s^*} &&
M_Y \ar[ll]_-{}^-{} 
&
0; \ar[l]
}
$$
with $s$ we denote a section of $\pi$.
Then, $\pDiv(\delta)$ lives on the toric variety
$Y:=\toric{\Sigma}\supseteq N_Y\otimes_\Z\kk^*=:T_Y$ 
with $\Sigma$ denoting the fan in $N_Y$ being the coarsest
common refinement of the image under $\pi$ of all faces of $\delta$.
As a function, $\pDiv(\delta)$ is given by
$$
\pDiv(\delta)(u) = s^*\big(\deg^{-1}(u)\cap\delta\dual\big)
$$
where the right hand side is a polyhedron in $M_Y$ whose normal fan is
refined by $\Sigma$. Thus, it encodes a semiample,
$T_Y$-invariant divisor on $Y$.
This implies that 
$$
\pDiv(\delta)=\sum_{a\in\Sigma(1)} \Delta_a\otimes\ko{\orb}(a)
\hspace{0.9em}\mbox{with}\hspace{0.6em}
\Delta_a=\big(\pi^{-1}(a)\cap\delta\big) - s(a)\subseteq N_\Q.
$$
Here $a\in\Sigma(1)$ are primitive lattice elements of rays in
$\Sigma$ and $\ko{\orb}(a)$ are their associated $T_Y$-invariant
divisors. The relation between these two representations of $\pDiv$
has been proved in \cite[Proposition 8.5]{toralv1} and, in a broader
context, in \cite{Diane}.

\subsection{The polyhedral coefficients}\label{inequDelta}
We will now present a method to describe the coefficients $\Delta_a$
with inequalities. This observation is as trivial as it is useful.

\begin{lemma}
\label{lem-inequDelta}
In the situation of {\rm (\ref{toricDown})}, the polyhedral coefficients
$\Delta_a$ are cut out by the inequalities
$\langle \kbb, \deg(r)\rangle \geq -\langle s(a), r\rangle$ for
$\,r\in\delta\dual$
(or generators of $\delta\dual$).
\vspace{-2ex}
\end{lemma}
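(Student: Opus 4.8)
The plan is to unwind the definition $\Delta_a=\big(\pi^{-1}(a)\cap\delta\big)-s(a)$ directly and to translate the resulting membership condition into the dual description of the cone $\delta$. First I would observe that, since $s$ is a section of $\pi$, the lift $s(a)$ already lies in $\pi^{-1}(a)$; hence translation by $-s(a)$ carries the affine subspace $\pi^{-1}(a)\subseteq\Q^n$ onto $\pi^{-1}(0)=i(N_\Q)$, which is exactly why $\Delta_a$ sits inside $N_\Q$. Concretely, a point $v\in N_\Q$ belongs to $\Delta_a$ if and only if $i(v)+s(a)\in\delta$, the condition $i(v)+s(a)\in\pi^{-1}(a)$ being automatic because $\pi\big(i(v)+s(a)\big)=a$.

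Next, as $\delta$ is a rational polyhedral cone, duality gives $\delta=(\delta\dual)\dual$, so $w\in\delta$ precisely when $\langle w,r\rangle\geq 0$ for every $r\in\delta\dual$. Applying this to $w=i(v)+s(a)$ yields $\langle i(v),r\rangle\geq -\langle s(a),r\rangle$ for all $r\in\delta\dual$. It then remains only to rewrite the left-hand pairing in terms of $\deg$. Here I would invoke the self-duality of the two displayed exact sequences: the map $\deg\colon\Z^n\to M$ is precisely the transpose $i^\ast$ of $i\colon N\to\Z^n$, which is exactly what the lower sequence records, with $M=N\dual$, $M_Y=N_Y\dual$ and $\ker\deg=M_Y$. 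Consequently $\langle i(v),r\rangle=\langle v,\deg(r)\rangle$, and the inequalities become $\langle v,\deg(r)\rangle\geq -\langle s(a),r\rangle$, i.e.\ the asserted description with $\kbb=v$. Since $\delta\dual$ is finitely generated as a cone, it suffices to impose these inequalities on a generating set, which gives the parenthetical refinement.

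The only point that genuinely needs care — the main, if modest, obstacle — is keeping the two pairings apart: the intrinsic pairing of $N$ against $M$ versus the standard pairing of $\Z^n$ against its dual. Everything hinges on the identification $\deg=i^\ast$ forced by the self-duality of the diagram; once this is in place, the statement reduces to the elementary manipulation of inequalities above.
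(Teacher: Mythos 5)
Your argument is correct and is essentially identical to the paper's own proof: both unwind $\Delta_a=\big(\pi^{-1}(a)\cap\delta\big)-s(a)$ to the condition $i(v)+s(a)\in\delta$ (the membership in $\pi^{-1}(a)$ being automatic) and then rewrite $\langle i(v)+s(a),r\rangle\geq 0$ using that $\deg$ is the transpose of $i$. Your version merely spells out the duality bookkeeping that the paper compresses into one line.
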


\begin{proof}
$x\in \Delta_a \iff i(x)+s(a) \in \pi^{-1}(a)\cap\delta
\iff i(x)+s(a) \in \delta \iff 
\langle x, \deg(r)\rangle + \langle s(a), r\rangle =
\langle i(x) + s(a), r\rangle \geq 0$ for all $r\in\delta\dual$.
\end{proof}

\subsection{Toric Cox rings}\label{toriCox}
Let $\CF$ be a simplicial
fan in some lattice $N_Z$. Identifying again its one-dimensional
rays $\CF(1)=\{a^1,\ldots,a^n\}$ with the first lattice points sitting on them,
we assume that $\CF(1)$ generates $N_Z$.
We would like to apply Lemma \ref{lem-inequDelta} to understand
the Cox ring of the $\Q$-factorial
toric variety $Z:=\toric{\CF}$. As a ring, it is simply 
$\Cox(Z)=\kk[x_a\kst a\in\CF(1)]$; but by setting $M:=\Cl(Z)$, 
it is then the $M$-grading which makes it interesting.
The exact sequences from (\ref{toricDown}) become 
$$
\xymatrix@R=4.5ex@C=2.2em{
0\ar[r] &
\Cl(Z)^* \ar[rr]^-{i} &&
\Div_{\TT}^*Z \ar[rr]^\pi^-{} 
&&
N_Z \ar[r] 
\ar@/^1.1pc/[ll]_-s^-{} 
&
0\\
0 &
\Cl(Z) \ar[l] &&
\Div_{\TT}Z \ar[ll]_-{p}^-{} &&
M_Z \ar[ll]_-{\div}^-{} 
&
0. \ar[l]
}
$$
Here we have denoted by $\Div_{\TT}Z\cong\Z^n$ the group of $T_Z$-equivariant
divisors; the rays $a^i$ are the images of the unit vectors $e^i$. 
Note that the torus $T$ acting on $\Cox(Z)$ is the Picard torus
$T=\gHom(\Cl(Z),\kk^*)$.
The degree cone of $\Cox(Z)$ is the cone of effective
divisors
$\Eff(Z)\subseteq\Cl_\Q(Z)$. Hence, the tail of the p-divisor
$\pDiv_{\Cox}$ will be the 
dualized cone $\Eff(Z)\dual\subseteq\Cl_\Q(Z)^*$.
\\[1ex]
According to (\ref{toricDown}), $\pDiv_{\Cox}$ lives on
$Y:=\toric{\Sigma}$ with $\Sigma$ being the coarsest fan in $N_Y=N_Z$
containing all possible cones generated by subsets of $\CF(1)$. In particular,
$\Sigma$ is a subdivision of $\CF$, i.e.\ $\Sigma\finer\CF$, i.e.\
there is a proper map $\psi:Y\to Z$ that becomes 
an isomorphism if it is restricted on the tori $T_Y=T_Z$.
In the surface case we have $\Sigma=\CF$; hence $Y=Z$ and $\psi=\id$.
Finally, the choice of the section $s$ will not affect the upcoming result.

\subsection{The splitting of $\pDiv_{\Cox}$}\label{pToriCoxIdSurf}
While p-divisors on $Y$ may be altered by so-called principal p-divisors
coming from $N\otimes_\Z\kk(Y)^*=\gHom(M,\kk(Y)^*)$,
this does not mean that $\pDiv$ is determined by an element
of $\Pol(N_\Q,\sigma)\otimes_\Z\Pic_{{\Q}}(Y)$. However,
elements of the group $N\otimes_\Z\Pic^\Q(Y)=\gHom(M,\Pic^\Q(Y))$ 
with $\Pic^\Q(Y):=\CaDiv_\Q(Y)/\PDiv(Y)\neq\Pic(Y)\otimes_\Z\Q$ 
denoting the $\Q$-Cartier divisors modulo principal divisors
do indeed give a correct description of an equivalent class of a 
polyhedral divisor. In particular,
it makes sense to add those elements to already existing p-divisors.

\begin{definition}
\label{def-pToriCoxThSurf}
In the case of 
(\ref{toriCox}), the pull back map
$M=\Cl(Z)\subseteq\Pic^\Q(Z)\to\Pic^\Q(Y)$ defines an element
$\psi^*\in \gHom(M,\Pic^\Q(Y))= N\otimes_\Z\Pic^\Q(Y)$ giving
rise to a splitting
\fbox{$\pDiv_{\Cox}=\psi^*+\pDiv'_{\Cox}$} 
with some correction term $\pDiv'_{\Cox}$.
\end{definition}

\begin{remark}
Note that although $\Pic^\Q(Y)\ne \Pic_\Q(Y)$,
    we nevertheless have a map $\Pic^\Q(Y)=\CaDiv_\Q(Y)/\PDiv(Y)\to
    \CaDiv_\Q(Y)/\PDiv_\Q(Y)=\Pic_\Q(Y)$, and therefore $\pDiv_{\Cox}$
    determines a map $\Cl_\Q(Z)\supset\Eff(Z)\to
    \Nef(Y)\subset\Pic_\Q(Y)$.  
\\
  The splitting of $\pDiv_{\Cox}$ into $\psi^*$ and a correction term
  is then quite natural: Since, on the one hand, a p-divisor $\pDiv$
  encodes the ring $\bigoplus_{u\in M}\kG(Y,\pDiv(u))$ and, on the
  other, $\Cox(Z)=\bigoplus_{u\in M}\kG(Z,u)= \bigoplus_{u\in
    M}\kG(Y,\psi^*u)$, one is tempted to say that $\pDiv=\psi^*$.
  However, since
  $\tail{(\pDiv)}\dual=\Eff(Z)\supseteq\Nef(Z)$, it is generally
  the case that
  $\psi^*(\tail{(\pDiv)}\dual)\not\subseteq\Nef(Y)$, i.e.\
  $\psi^*$ is not a p-divisor.  Thus, all
  $u\in\tail{(\pDiv)}\dual\cap M$ leading to
  non-semiample divisors have to be processed.
\end{remark}

\subsection{The p-divisor of toric Cox rings}\label{pToriCoxTh}
If $E\subseteq{Y}=\toric{\Sigma}$ and $P\subseteq Z=\toric{\CF}$ are
toric prime divisors, then there are associated rays
$a(E)\in{\Sigma}{(1)}\subseteq N_{Y\kyz Z}:=N_Y=N_Z$ and 
$a(P)\in{\CF}{(1)}\subseteq
N_{Y\kyz Z}$, respectively.  Remember that we identify a ray with its
integral, primitive generator.  In particular, each $a(E)$ sits in a
unique minimal cone $C_E\in\CF$; hence there are unique
$\lambda_E(P)\in\Q_{>0}$ such that $a(E)=\sum_{a(P)\in C_E}
\lambda_E(P) a(P)$. (Remember that $\CF$ is a simplicial fan.) Set
$\lambda_E(P):=0$ for $a(P)\notin C_E$.

\begin{remark}
  Note that $\lambda_E(P)>0$ $\iff$ $a(P)\in C_E$ $\iff$
  $\psi(E)\subseteq P$.  In non-toric terms, these coefficients can be
  expressed as $\lambda_E(P)=\mult_E(\psi^*P)$.  If $E$ does not get
  contracted, then we may identify $E\subseteq{Y}$ with its divisorial
  image $\psi(E)\subseteq Z$; then $\lambda_E(P)=1$ if $E=P$ and
  $\lambda_E(P)=0$ of $E\ne P$ . In dimension two, this is always the
  case (because ${Y}=Z$).
\end{remark}

\begin{theorem}
\label{pToriCoxTheorem}
$\,\pDiv'_{\Cox}=\sum_E \Delta_E\otimes E$
with $E\subseteq{Y}$ running through the toric prime divisors
and $\Delta_E\subseteq\Cl(Z)^*_\Q$ being the polyhedron cut out by the 
inequalities
$\langle\kbb,[P]\rangle\geq -\lambda_{E}(P)$ for toric prime divisors $P$.
In particular $\Delta_E\supseteq\tail\pDiv_{\Cox}$.
\vspace{-2ex}
\end{theorem}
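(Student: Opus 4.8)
The plan is to apply Lemma~\ref{lem-inequDelta} directly to the setup of (\ref{toriCox}), but with a twist coming from the splitting $\pDiv_{\Cox}=\psi^*+\pDiv'_{\Cox}$ of Definition~\ref{def-pToriCoxThSurf}. First I would recall from (\ref{toricDown}) and (\ref{toriCox}) that the raw p-divisor $\pDiv_{\Cox}=\pDiv(\delta)$, built from the toric cone $\delta$ whose rays are the unit vectors $e^i\in\Div_\TT Z$ (so that $\delta=\pR^n$ and $\delta\dual=\pR^n$), has coefficients $\Delta_a$ cut out by the inequalities $\langle\kbb,\deg(e^i)\rangle\geq -\langle s(a),e^i\rangle$. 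Since $\deg=p$ sends the unit vector $e^i$ to the divisor class $[P_i]$ of the toric prime divisor $P_i$ corresponding to the ray $a^i=a(P_i)$, these inequalities read $\langle\kbb,[P_i]\rangle\geq -\langle s(a),e^i\rangle$. The whole point is then to identify the right-hand side $\langle s(a),e^i\rangle$ with the multiplicity $\lambda_E(P_i)$ once we pass to the corrected divisor $\pDiv'_{\Cox}$.

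The key step is thus the bookkeeping of the section $s$. For a ray $a=a(E)\in\Sigma(1)$, the splitting records $\psi^*$ separately, so $\pDiv'_{\Cox}$ is obtained from $\pDiv_{\Cox}$ by subtracting the $\psi^*$-part, which amounts to replacing the affine shift $s(a)$ by the canonical expression of $a(E)$ as a combination of the $a(P)$. Concretely, I would argue that choosing $s$ so that the composite $\pi\circ s=\id_{N_Z}$ makes $\langle s(a(E)),e^i\rangle$ equal to the coefficient with which $a(E)$ sees the ray $a^i$; after the correction by $\psi^*$ this becomes exactly $\lambda_E(P_i)$, the coefficient in $a(E)=\sum_{a(P)\in C_E}\lambda_E(P)\,a(P)$, extended by zero for $a(P)\notin C_E$. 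The Remark preceding the theorem, giving $\lambda_E(P)=\mult_E(\psi^*P)$, is the conceptual anchor: the correction term $\psi^*$ records precisely the multiplicities $\mult_E(\psi^*P)$, so subtracting it off leaves the $\lambda_E(P)$ in the coefficient slots of $E$. Translating Lemma~\ref{lem-inequDelta} through this identification yields $\Delta_E=\{C\in\Cl(Z)^*_\Q\kst \langle C,[P]\rangle\geq -\lambda_E(P)\text{ for all toric prime }P\}$.

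The final assertion $\Delta_E\supseteq\tail\pDiv_{\Cox}=\Eff(Z)\dual$ I would verify by noting that $\Eff(Z)$ is generated by the classes $[P]$ of the toric prime divisors, so $\Eff(Z)\dual=\{C\kst\langle C,[P]\rangle\geq 0\text{ for all }P\}$. Since each $\lambda_E(P)\geq 0$, every inequality defining $\tail\pDiv_{\Cox}$ is a tightening of the corresponding inequality $\langle C,[P]\rangle\geq -\lambda_E(P)$ defining $\Delta_E$, whence $\tail\pDiv_{\Cox}\subseteq\Delta_E$; this also confirms that each $\Delta_E$ has the prescribed tail cone, so that $\pDiv'_{\Cox}$ is a genuine polyhedral divisor with tail $\Eff(Z)\dual$.

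\emph{The main obstacle} I expect is purely the section-tracking: the coefficients of $\pDiv(\delta)$ in (\ref{toricDown}) are written as $\Delta_a=(\pi^{-1}(a)\cap\delta)-s(a)$, and making the subtraction of $\psi^*$ interact cleanly with this affine shift $-s(a)$ requires being careful that the $\psi^*$-correction and the section $s$ are compatible, i.e.\ that the outcome is genuinely independent of the choice of $s$ (as promised at the end of (\ref{toriCox})). Everything else is a formal rewriting of Lemma~\ref{lem-inequDelta}; the subtle point is to check that the affine-shift ambiguity in $s$ is exactly absorbed into the $\psi^*$ summand, so that $\pDiv'_{\Cox}$ depends only on the intrinsic data $\lambda_E(P)$.
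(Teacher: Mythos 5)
Your setup is the right one---apply Lemma~\ref{lem-inequDelta} with $\delta=\Q_{\geq 0}^n$, read off the raw inequalities $\langle\kbb,[P]\rangle\geq-\langle s(a(E)),P\rangle$ for $\pDiv_{\Cox}$, and then account for the passage to $\pDiv'_{\Cox}=\pDiv_{\Cox}-\psi^*$---and your closing paragraph on $\tail\pDiv_{\Cox}\subseteq\Delta_E$ is fine. But the heart of the theorem is exactly the step you defer to as ``the main obstacle,'' and you never actually do it. What must be shown is that the discrepancy between the section-dependent shift $s(a(E))$ and the intrinsic shift $e(E):=\sum_{P}\lambda_E(P)\,e(P)$ (note $\pi(e(E))=a(E)$, so $b(E):=e(E)-s(a(E))\in\ker\pi=\Cl(Z)^*$) assembles, over all toric prime divisors $E\subseteq Y$, into precisely the element $\psi^*\in\Cl(Z)^*\otimes\Cl(Y)$ being subtracted in Definition~\ref{def-pToriCoxThSurf}. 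The paper proves this by introducing the cosection $t$ with $it+s\pi=\id_{\Z^n}$, so that $b(E)=it(e(E))$, and then computing $\sum_E b(E)\otimes[E]=\bigl((it)\otimes\cl_Y\bigr)\bigl(\sum_E e(E)\otimes E\bigr)$ together with the identity $\sum_E e(E)\otimes E=\sum_{E,P}\lambda_E(P)\,e(P)\otimes E=\psi^*$, which follows from $\psi^*P=\sum_E\lambda_E(P)E$. Without this computation the identification of the correction term with $\psi^*$ is an assertion, not a proof.

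Moreover, the one concrete mechanism you propose in its place is not viable: you suggest ``choosing $s$ so that $\pi\circ s=\id_{N_Z}$ makes $\langle s(a(E)),e^i\rangle$ equal to the coefficient with which $a(E)$ sees the ray $a^i$.'' Every section satisfies $\pi\circ s=\id$, so this condition selects nothing; and no single linear section $s:N_Z\to\Z^n$ can satisfy $s(a(E))=e(E)$ for all rays $E$ simultaneously, since the number of rays exceeds $\rank N_Z$ whenever $\Cl(Z)\neq 0$ (if it could, one would get $b(E)=0$ for all $E$, forcing $\psi^*=0$, which is absurd). The correct statement---consistent with the paper's promise that the choice of $s$ does not matter---is that changing $s$ changes each $b(E)$ by an element of $\Cl(Z)^*$ in a way that is cancelled against the corresponding change in the raw coefficients, and the proof must exhibit the $s$-independent answer explicitly rather than normalize $s$ away.
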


\begin{proof}
In the first exact sequence of (\ref{toriCox}),
we add the cosection $t:\Z^n\to\Cl(Z)^*$ induced from $s$.
Then, the maps satisfy $\,it+s\pi=\id_{\Z^n}$, $\,ti=\id_{\Cl^*}$, and 
$\,\pi s=\id_{N_{Y\kyz Z}}$.
$$
\xymatrix@R=4.5ex@C=2.2em{
0\ar[r] &
\Cl(Z)^* \ar[rr]^-{i} &&
(\Z^n=\Div^*_{\TT}Z) \ar[rr]^\pi^-{} 
\ar@/^1.1pc/[ll]_-{t} 
&&
N_{Y\kyz Z} \ar[r] 
\ar@/^1.1pc/[ll]_-s^-{} 
&
0
}
\vspace{1ex}
$$
Denote by $\{e(P)\}\subseteq \Z^n=\Div^*_{\TT}Z$ the dual basis
with respect to that of the toric prime divisors of $Z$.
In particular, $\pi(e(P))=a(P)\in N_{Y\kyz Z}$.
This notion can be extended to the prime divisors on ${Y}$ via
$e(E):=\sum_{a(P)\in C_E} \lambda_E(P) e(P)$;
we keep the property $\pi e=a$.
\\[1ex]
If $E\subseteq{Y}$ is a toric prime divisor (corresponding to the ray
$a(E)\in{\Sigma}{(1)}\subseteq N_{Y\kyz Z}$), then,
by Lemma \ref{lem-inequDelta}, the true coefficient
$\Delta_{E}^{\Cox}$ is given by the inequalities
$\langle \kbb, [P] \rangle \geq -\langle s(a(E)),\, P\rangle$
where the latter just means the $P$-th entry 
of $-s(a(E))\in\Z^n$.
On the other hand,
the claimed inequalities for $\Delta_E$ of $\pDiv'_{\Cox}$ 
are $\langle \kbb, [P] \rangle \geq -\lambda_E(P)
=-\langle e(E),P\rangle$.
Thus, it remains for us to show that $b(E):=e(E)-s(a(E))\in\Z^n$
is contained in $\Cl(Z)^*\subseteq\Z^n$ and satisfies
$d:=\sum_E b(E)\otimes [E]=\psi^*\in\Cl(Z)^*\otimes\Cl({Y})$.
\\[1ex]
The first claim follows from
$\,b(E)=e(E)-s(a(E))=e(E)-s\pi(e(E))=it(e(E))$.
Moreover,
$\,d=\sum_E it(e(E))\otimes [E]=
\big((it)\otimes \cl_{{Y}}\big)\circ
\big(\sum_E e(E)\otimes E\in\Div_{\TT}^*Z\otimes_\Z\Div_{\TT}{Y}\big)$
where $\cl$ denotes the canonical map $\,\Div\to\Cl$.
On the other hand,
since, for a toric prime divisor $P\subseteq Z$,
$\psi^\ast P=\sum_E \lambda_E(P) E
$,
we obtain that
$\psi^*=\sum_{E,P} \lambda_E(P)\, e(P)\otimes E=\sum_E e(E)\otimes E$,
i.e.\
$\,d=\big((it)\otimes \cl_{{Y}}\big)\circ \psi^*$. Restricted,
via $i$, to $\Cl(Z)^*$, this yields 
$\big((iti)\otimes \cl_{{Y}}\big)\circ \psi^*=\big(i\otimes
\cl_{{Y}}\big)\circ \psi^*=
\big(\cl_Z^*\otimes
\cl_{{Y}}\big)\circ \psi^*=\psi_{\Cl}^*$.
\vspace{-2ex}
\end{proof}
See (\ref{extwoP2}) for an example.

\bigskip

\section{Duality of polyhedra}\label{dualpol}%

\subsection{Cones over polyhedra}\label{dualPol}
Dualization of polyhedral cones 
via $\sigma\dual:=\{x\kst\langle\sigma,x\rangle\geq 0\}$
is a straightforward generalization of the
dualization of vector spaces. 
One has the basic relations $(\sigma\dual)\dual=\sigma$ and
$(\sigma_1\cap\sigma_2)\dual=\sigma_1\dual + \sigma_2\dual$.
Moreover, via $\tau\,(\leq \sigma) \mapsto
\tau':=\tau^\bot\cap\sigma\dual\,(\leq\sigma\dual)$
it provides a bijection of faces. 
For the convenience of the reader,
we will recall how this theory can be further extended to
the set of polyhedra containing the origin.
\\[1ex]
Let $V$ be a finitely-dimensional $\Q$-vector space and $\PD\subseteq
V$ be a polyhedron containing $0$. Then, we define
$$
\nabla:=\PD\dual:=\{x\in V^*\kst \langle \PD,x\rangle \geq -1\}.
$$
This construction can be understood by the ordinary duality notion of
cones.  It just requires a definition of the cone $C(\PD)$ spanned
over a polyhedron $\PD$ located in an affine hyperplane $V\times
\{1\}\subset V\times\bQ$. Namely, we set
$$
C(\PD):=\ko{\Q_{\geq 0}\cdot (\PD,1)}\;=\; 
\Q_{> 0}\cdot (\PD,1) {\;\textstyle \disjointcup\;} (\tail(\PD),0)
\;\subseteq\; V\oplus\Q.
$$
The polyhedron $\PD$ can be recovered as cross section $\PD=C(\PD)\cap
(V\times\{1\})$.  Then we verify that $C(\nabla %
)=C(\PD)\dual$; hence $\nabla\dual=(\PD\dual)\dual=\PD$ and
$\,(\PD_1\cap \PD_2)\dual =\conv(\PD_1\cup \PD_2)$.  Note that
$\PD_1+\PD_2\subseteq 2\conv(\PD_1\cup \PD_2)\subseteq 2(\PD_1+\PD_2)$
and, in general, $C(\PD_1+\PD_2)\neq C(\conv(\PD_1\cup
\PD_2))=C(\PD_1)+C(\PD_2)$.

\subsection{Heads and tails}\label{headTail}
Inside $V$ there are two cones associated to $\PD$. One is the already
mentioned $\,\tail(\PD)=C(\PD)\cap (\ku{0},1)^\bot$; 
since $0\in \PD$, we have $\tail(\PD)\subseteq \PD$.
The other is $\head(\PD):=\Q_{\geq 0}\PD\supseteq \PD$. 

If $\PD$ was already a polyhedral cone itself, then both cones
coincide and are equal to $\PD$.  In general, polyhedral duality
interchanges both constructions, i.e.\ $\tail (\nabla %
)=\head(\PD)\dual$ and $\head(\nabla)=\tail(\Delta)\dual$. Indeed,
$x\in\tail \PD\dual$ $\iff$ $\PD\dual +\Q_{\geq 0}\,x \subseteq
\PD\dual$ $\iff$ $\langle x,\PD \rangle \geq 0$ $\iff$ $\langle
x,\Q_{\geq 0}\PD \rangle \geq 0$. This duality is even more
transparent if we note that
$$\begin{array}{ccc}\head(\PD)=\bigcup_{t\rightarrow\infty}t\cdot\PD&
{\rm \ and\  }&
  \tail(\PD)=\bigcap_{t\rightarrow 0}t\cdot\PD\end{array}$$

\subsection{Face duality}\label{faceDual}
Via applying $C$, the nonempty faces $F\leq \PD$ correspond
bijectively to the faces of $C(\PD)$ not contained in $\tail(\PD)\leq C(\PD)$. 
The inverse map is the intersection with $V\times\{1\}$.
Since the dual face $(\tail \PD)'\leq C(\PD)\dual=C(\nabla %
)$ 
contains $(\ku{0},1)$,
it is not contained in $\tail \nabla %
$, and it corresponds to the minimal face of $\nabla %
$ that contains $0$.
Thus, restricting the duality
$\,\faces(C\PD)\leftrightarrow\faces(C\nabla %
)$ to those faces
with $\not\subseteq (\tail \PD)$ and $\not\supseteq (\tail \nabla %
)'$ 
on the left
hand side and doing similarly on the right, we obtain an order and dimension
reversing bijection
$$
\{\mbox{faces } F\leq \PD\kst 0\notin F\}
\leftrightarrow
\{\mbox{faces } F'\leq \nabla %
\kst 0\notin F'\}.
$$
The remainings of the bijection 
$\,\faces(C\PD)\leftrightarrow\faces(C\nabla %
)$
translate into
$$
\{\mbox{faces } F\leq \PD\kst 0\in F\}=
\faces(\head\PD) \leftrightarrow \faces((\head\PD)\dual)=
\faces(\tail \nabla %
)
$$
and, analogously, 
$\,\faces(\tail \PD)\leftrightarrow \{\nabla %
\mbox{-faces containing }0\}$.

\subsection{Fanwise linear functions}\label{pwlinear}
A rational (or real) function is called fanwise linear if it is linear
on the closed cones of a fan (hence it is continuous on the support of the
fan). This is equivalent to being piecewise (affine) linear and
homogeneous, that is $f(t\cdot v)=t\cdot f(v)$ for $t\in\bQ_{\geq 0}$.
For a polyhedron $\Delta\subseteq V$, we define the fanwise linear
function $\pwl(\Delta): V^*\rightarrow \bQ\cup\{-\infty\}$ by setting
$\pwl(\Delta)(v)=\min \langle \Delta,v\rangle$.  In particular,
$\pwl(\Delta)^{-1}(\bQ)=(\tail \Delta)\dual$.  If, additionally,
$0\in\Delta$, then $\pwl(\Delta): V^*\rightarrow \bQ_{\leq
  0}\cup\{-\infty\}$ with $\pwl(\Delta)^{-1}(\bQ_{\leq 0})
=\head(\nabla)$.  Moreover, $\pwl(\Delta)^{-1}(0)=\tail(\nabla)$.

\begin{lemma}\label{pwl-characterization}
  If $\Delta$ and $\nabla$ are mutually dual polyhedra containing $0$,
  then
$$\pwl(\Delta)(v)=\frac{-1}{\max\{t\in\bQ\kst tv\in\nabla\}}.$$
Equivalently, the homogeneous, continuous function
$\,\pwl(\Delta):\head(\nabla)\to\Q_{\leq 0}$ is characterized by the
property that $\,\pwl(\Delta)\equiv -1$ on
$\,\partial\nabla\cap\innt(\head\nabla)$, where $\partial$ and $\innt$
denote, respectively, relative boundary and interior of the cone.  In
particular, $\pwl(\Delta)$ is equal to $-1$ on all non-zero vertices
of $\nabla$.
\end{lemma}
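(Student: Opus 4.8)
The plan is to reduce everything to one identity about $\nabla$ together with the homogeneity of $\pwl(\Delta)$. The key observation is that, directly from $\nabla=\Delta\dual=\{v\kst\langle\Delta,v\rangle\ge -1\}$ and the definition $\pwl(\Delta)(v)=\min\langle\Delta,v\rangle$, one has
$$\nabla=\{v\in V^*\kst \pwl(\Delta)(v)\ge -1\}.$$
Fix $v\in\head(\nabla)$, so that $c:=\pwl(\Delta)(v)$ is finite and, since $0\in\Delta$, satisfies $c\le 0$. As $\head(\nabla)$ is a cone and $\nabla\subseteq\head(\nabla)$, any $t>0$ with $tv\in\nabla$ keeps $v\in\head(\nabla)$; together with $0\in\nabla$ this shows the maximum below is attained at some $t\ge 0$, where the homogeneity $\pwl(\Delta)(tv)=t\,c$ applies. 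The identity then gives $tv\in\nabla\iff tc\ge -1$. If $c<0$ this reads $t\le -1/c$, so $\max\{t\kst tv\in\nabla\}=-1/c$ and hence $\pwl(\Delta)(v)=-1/\max\{t\kst tv\in\nabla\}$; if $c=0$, i.e.\ $v\in\tail(\nabla)$, every $t\ge 0$ qualifies, the maximum is $+\infty$, and $-1/(+\infty)=0=c$. (For $v\notin\head(\nabla)$ only $t\le 0$ is feasible, the maximum is $0$, and both sides equal $-\infty$.)

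For the equivalent, intrinsic description I would identify the locus $\{\pwl(\Delta)=-1\}$ with a piece of $\partial\nabla$. Concretely, I claim
$$\partial\nabla\cap\innt(\head\nabla)=\{v\in\innt(\head\nabla)\kst\pwl(\Delta)(v)=-1\}.$$
Writing $\Delta=\tail(\Delta)+\conv\{v_j\}$ with the $v_j$ the vertices of $\Delta$, the finite inequalities defining $\nabla$ are $\langle v_j,\kbb\rangle\ge -1$ and the recession inequalities are $\langle n,\kbb\rangle\ge 0$ for $n\in\tail(\Delta)$. On the relative interior of $\nabla$ all finite inequalities are strict, so $\pwl(\Delta)>-1$ there; dually, by $\head(\nabla)\dual=\tail(\Delta)$ from (\ref{headTail}), a point of $\innt(\head\nabla)$ pairs strictly positively with every nonzero $n\in\tail(\Delta)$, so no recession inequality is active. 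Hence a point of $\innt(\head\nabla)$ lies on $\partial\nabla$ exactly when a finite inequality is active, i.e.\ when $\pwl(\Delta)=-1$, proving the claim and with it $\pwl(\Delta)\equiv -1$ on $\partial\nabla\cap\innt(\head\nabla)$. Since $\pwl(\Delta)$ is homogeneous and continuous and every ray of $\innt(\head\nabla)$ outside $\tail(\nabla)$ meets this cross-section exactly once (at the point $-v/c$), these values determine $\pwl(\Delta)$ on $\innt(\head\nabla)\setminus\tail(\nabla)$; continuity then pins it down on all of $\head(\nabla)$, the value on $\tail(\nabla)$ being forced to $0$. This is precisely the asserted characterization.

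The statement about vertices follows straight from the reciprocal formula, bypassing the boundary analysis: if $w$ is a nonzero vertex of $\nabla$ then $w$ is an extreme point, and since $0\in\nabla$ we cannot have $tw\in\nabla$ for any $t>1$ (else $w=\tfrac1t(tw)+(1-\tfrac1t)\,0$ would be a nontrivial convex combination of points of $\nabla$), while $w\in\nabla$; hence $\max\{t\kst tw\in\nabla\}=1$ and $\pwl(\Delta)(w)=-1$. I expect the one genuinely delicate point to be the displayed set equality in the second paragraph — verifying that on $\innt(\head\nabla)$ the active facet of $\nabla$ is always a finite one rather than a recession facet. The duality $(\head\nabla)\dual=\tail(\Delta)$ is exactly what rules out the recession facets, and interpreting $\partial$ and $\innt$ relatively, as the statement prescribes, handles the lower-dimensional and purely conical degenerate cases.
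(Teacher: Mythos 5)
Your proof is correct, and it is both better organized and more complete than the one in the paper. The underlying mechanism is the same in both arguments --- everything rests on the identity $\nabla=\{v\kst \pwl(\Delta)(v)\geq -1\}$ combined with the homogeneity $\pwl(\Delta)(tv)=t\,\pwl(\Delta)(v)$ --- but the paper only verifies the second formulation (that $\pwl(\Delta)\equiv -1$ on $\partial\nabla\cap\innt(\head\nabla)$), taking $v$ there, observing $\pwl(\Delta)(v)\geq -1$, and ruling out $\pwl(\Delta)(v)=\lambda\in(-1,0]$ by rescaling; it never derives the displayed reciprocal formula, the vertex statement, or the claim that the boundary values actually characterize the function. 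You instead prove the reciprocal formula first and deduce the rest from it, which in particular handles vertices of $\nabla$ lying on $\partial(\head\nabla)$, where the boundary-slice statement alone would not apply. You also supply the one step the paper asserts without justification, namely that $v\in\partial\nabla\cap\innt(\head\nabla)$ forces $tv\notin\nabla$ for $t>1$: your facet analysis (finite facets $\langle v_j,\kbb\rangle=-1$ versus recession facets $\langle n,\kbb\rangle=0$, the latter excluded on $\innt(\head\nabla)$ by $\head(\nabla)\dual=\tail(\Delta)$) is exactly what is needed, and your parenthetical caveats about lines in $\tail(\Delta)$ and relative interiors are handled correctly by working modulo the lineality space. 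The only cosmetic quibble is the phrase ``the maximum below is attained at some $t\geq 0$'' in the first paragraph, which is false in the case $c=0$ (the supremum is $+\infty$); you treat that case separately anyway, so nothing is lost.
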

\begin{proof}
  Let us consider $v\in \,\partial\nabla\cap\innt(\head\nabla)$, then
  $t\cdot v\not\in \nabla$ for every $t>1$.  Moreover, by definition,
  $\langle v,\Delta\rangle\geq -1$, hence $\pwl(\Delta)(v)\geq -1$. On
  the other hand, if $0> \lambda>-1$ is such that for all $u\in
  \Delta$ it holds $\langle u,v\rangle \geq \lambda$, then $\langle
  u,|\lambda|^{-1}v\rangle\geq -1$; hence, by definition of duality of
  polyhedra, $|\lambda|^{-1}v$ is in $\nabla$ contradicting the
  assumption.
\end{proof}

Conversely, let $f:\altcone\rightarrow \bQ_{\geq 0}$ be a fanwise
linear function defined on a rational, convex polyhedral cone
$\altcone\subseteq V^*$.  We assume that $f$ is also concave, that is
$f(v_1+v_2)\leq f(v_1)+f(v_2)$. Defining
$$
\nabla_f:=\conv\big\{ f(v)^{-1}\cdot v\kst v\in\altcone\big\}
\hspace{0.8em}\mbox{with}\hspace{0.6em}
0^{-1}\cdot v:=\bQ_{\geq 0}\cdot v,
$$
we get a polyhedron with $\,\head(\nabla_f)=\altcone$ and
$\,\tail(\nabla_f)=f^{-1}(0)$.

\begin{lemma}\label{pwl<->polyhedra}
  Let $\Delta_f$ be a polyhedron dual to $\nabla_f$ defined
  above. Then, over the cone $\altcone\subseteq V^*$ it holds
     $$\pwl(\Delta_f)=-f.$$
\end{lemma}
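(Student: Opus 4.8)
The plan is to reduce the statement to the radial (support-function) description of $\nabla_f$ provided by Lemma~\ref{pwl-characterization}, and then to identify $\nabla_f$ with a sublevel set of $f$. First I would record that $\Delta_f$ and $\nabla_f$ form a mutually dual pair of polyhedra both containing $0$: indeed $0\in\nabla_f$ because $\tail(\nabla_f)=f^{-1}(0)\ni 0$, and hence $0\in\Delta_f=\nabla_f\dual$ as well, while biduality gives $\Delta_f\dual=\nabla_f$. Applying Lemma~\ref{pwl-characterization} to this pair yields, for every $v\in\head(\nabla_f)=\altcone$,
$$\pwl(\Delta_f)(v)=\frac{-1}{\max\{t\in\Q\kst tv\in\nabla_f\}},$$
so it suffices to show that the radial function $g(v):=\max\{t\in\Q\kst tv\in\nabla_f\}$ equals $f(v)^{-1}$ throughout $\altcone$ (with the convention $g(v)=\infty=f(v)^{-1}$ when $f(v)=0$).

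The key step will be the identity $\nabla_f=\{x\in\altcone\kst f(x)\leq 1\}$. Since $f$ is homogeneous and subadditive it is convex, so $\{f\leq 1\}$ is convex and its intersection with the convex cone $\altcone$ is a convex set; it contains every generator $f(w)^{-1}w$ of $\nabla_f$ — each satisfies $f(f(w)^{-1}w)=1$ by homogeneity — together with the tail rays $\Q_{\geq 0}w$ for which $f(w)=0$, and passing to convex hulls gives the inclusion $\nabla_f\subseteq\{x\in\altcone\kst f(x)\leq 1\}$. Conversely, if $x\in\altcone$ with $0<f(x)\leq 1$, then $f(x)^{-1}x\in\nabla_f$ is a generator and $0\in\nabla_f$, so the convex combination $x=f(x)\cdot(f(x)^{-1}x)+(1-f(x))\cdot 0$ lies in $\nabla_f$; and if $f(x)=0$ then $x\in f^{-1}(0)=\tail(\nabla_f)\subseteq\nabla_f$. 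This gives the reverse inclusion and hence the identity.

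With this sublevel description the radial computation is immediate: for $v\in\altcone$ and $t\geq 0$ one has $tv\in\altcone$ automatically, while $f(tv)=tf(v)\leq 1$ holds exactly for $t\leq f(v)^{-1}$ (for all $t$ when $f(v)=0$). Therefore $g(v)=f(v)^{-1}$, and substituting into the formula from Lemma~\ref{pwl-characterization} yields $\pwl(\Delta_f)(v)=-f(v)$ on all of $\altcone$, as claimed. The hard part is exactly the inclusion $\nabla_f\subseteq\{f\leq 1\}$: this is the one place where the hypothesis that $f$ be subadditive (the paper's ``concavity'') is genuinely used, since it is what prevents the convex hull of the normalized generators from overshooting the unit sublevel set. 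The remaining ingredients — the biduality $\Delta_f\dual=\nabla_f$, the memberships $0\in\Delta_f,\nabla_f$, and the degenerate rays where $f$ vanishes — are routine bookkeeping already set up by the discussion of $\head$, $\tail$, and fanwise linear functions preceding the lemma.
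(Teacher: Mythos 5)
Your proof is correct and follows essentially the same route as the paper's: both reduce the claim via Lemma~\ref{pwl-characterization} to computing the radial (gauge) function of $\nabla_f$, use the definition of the generators $f(w)^{-1}w$ for one inequality, and use homogeneity plus subadditivity (i.e.\ convexity) of $f$ for the other. Your packaging of the convexity step as the sublevel-set identity $\nabla_f=\{x\in\altcone\kst f(x)\leq 1\}$ is just a tidier restatement of the paper's argument that any convex combination $tv=\sum_i a_i f(v_i)^{-1}v_i$ satisfies $tf(v)\leq 1$.
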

\begin{proof}
  Let us set $g(v)=\big(\sup\{t\kst tv\in\nabla_f\}\big)^{-1}$.  Clearly,
  both $f$ and $g$ vanish exactly on $\tail(\nabla_f)\subset \sigma$
  so we can assume that $v$ is chosen so that both are non-zero. By
  definition of $\nabla_f$ we have $f(v)^{-1}\cdot v\in\nabla_f$; hence
  $f(v)^{-1}\leq \sup\{t\kst tv\in\nabla_f\}$ and thus $g(v)\leq
  f(v)$. Now suppose that $t\cdot v\in\nabla_f$; hence, by definition
  of $\nabla_f$, $$t\cdot v=\sum_i a_if(v_i)^{-1}\cdot v_i$$ for some
  $v_i\in\sigma$ and positive numbers $a_i$ such that $\sum_i
  a_i=1$. Applying the function $f$ to both sides of the equality and
  using its homogenity and convexity, we get $$t\cdot f(v)\leq \sum_i
  a_if(v_i)^{-1}\cdot f(v_i),$$ hence $t\cdot f(v)\leq 1$. Thus
  $\sup\{t\kst tv\in\nabla_f\}\leq f(v)^{-1}$ hence $g(v)^{-1}\leq
  f(v)^{-1}$ and thus $g(v)\geq f(v)$. Since $g=-\pwl(\Delta_f)$ this
  concludes the proof.
\end{proof}

\begin{remark}
  It is possible to weaken the assumption of fanwise linearity to
  homogeneity of $f$ (see the above proof). Then, $\nabla_f$ and
  $\Delta_f$ still become well-defined, mutually dual convex bodies --
  but they lose their polyhedral structure.
\end{remark}

\subsection{Dualized Cox coefficients}\label{dualPolCox}
The duality described in (\ref{dualPol}) allows a nicer description of the
polyhedral coefficients $\Delta_E\subseteq\Cl_\Q(Y)^*$ from
Theorem \ref{pToriCoxTheorem}. Since they contain the origin,
it makes sense to define their duals
$\nabla_E:=\Delta_E\dual\subseteq\Cl_\Q(Z)$. It follows that
$$
\renewcommand{\arraystretch}{1.3}
\begin{array}{@{}r@{\;}c@{\;}l@{}}
\nabla_E  &=&
\conv\big\{0,\, [P]/\lambda_E(P)\kSt 
\psi(E)\subseteq P\subseteq Z\big\}
+ \hspace{-0.1em}\sum_{P\not\supseteq\psi(E)}\hspace{-0.2em}
\Q_{\geq 0}\cdot [P]
\\
&=&
\conv\big\{ [P]/\lambda_E(P)\kSt 
P\subseteq Z\big\} \;\subseteq\;\head\nabla_E=\Eff(Z)
\hspace{0.7em}
(v/0:=\bQ_{\geq 0}\cdot v)
\end{array}
$$
with $P$ running through the toric prime divisors of $Z$
and
$\lambda_E(P)=\mult_E(\psi^*P)$.
Using these polyhedra, we obtain
$\pDiv_{\Cox}'=\sum_E\nabla_E\dual\otimes E$,
and $\pDiv_{\Cox}'(u)$ contains $E$ with multiplicity
$$
\min\langle\Delta_E,u\rangle=
-1/\max\{\lambda\in\Q\kst \lambda u\in\nabla_E\} \;\in\; 
\Q_{\leq 0}\cup\{-\infty\}.
$$

\section{MDS and their Cox p-divisor} \label{MDS}

\subsection{Mori dream spaces}\label{introMDS}
Mori dream spaces (MDS) were introduced in \cite{MDS}. Recall that $Z$
is a $\Q$-factorial variety with $\Cl(Z)$ being a lattice and
$\Cox(Z)$ being finitely generated.

The birational geometry of $Z$ is finite, i.e.\ $Z$ has finitely many
small (i.e.~isomorphic in codimension one) $\Q$-factorial
modifications $Z_i$ (set $Z_0:=Z$); we will call them SQM models of
$Z$.  The varieties $Z_i$ are exactly the $\Q$-factorial GIT quotients
of $\Cox(Z)$ by the Picard torus arising from
  linearizations of the trivial bundle depending on the choice of a
  character of the torus, see \cite{MDS}.
All models $Z_i$ share the same Cox ring and can be distinguished by
pure combinatorics, cf.\ \cite{CoxII}.  In particular, by strict
transforms, we can identify $\Div(Z_i)$ and $\Cl(Z_i)$ with $\Div (Z)$ and
$\Cl (Z)$, respectively.  The same holds true for the cones $\Eff
(Z_i)=\Eff (Z)$ and $\Mov (Z_i)=\Mov (Z)$.  However, the cones $\Nef(Z_i)$ are
different, that is $\innt\Nef(Z_i)\cap\innt\Nef(Z_j)=\emptyset$ if
$Z_i\ne Z_j$, and we have the decomposition $\Mov(Z)=\bigcup_i\Nef(Z_i)$,
\cite[1.11(3)]{MDS}.  This chamber decomposition is polyhedral and
coincides with that of the stability with respect to the Picard torus,
cf.\ \cite[2.3]{MDS} and \cite{vgit}. Finally, the maybe most striking
feature of Mori dream spaces is that nefness implies semiampleness.

\subsection{The Chow limit}\label{introChow}
Let $Y$ be the Chow quotient of $\Cox(Z)$ by the Picard torus, i.e.,
by abuse of notation, the normalized component of the inverse limit of
the models (GIT quotients) $Z_i$ that is birational to the original
$Z$. In particular, we have birational morphisms $\psi_i:Y\to Z_i$.

Note that $Y$ carries two types of exceptional divisors:
\vspace{-2ex}
\begin{itemize}
\item[(i)] An irreducible divisor $E\subseteq Y$ is called of the
  first kind if it is a component of the exceptional locus of a
  morphism $\psi_i:Y\rightarrow Z_i$. Note that since $Z_i$ is
  $\bQ$-factorial, the exceptional locus of $\psi_i$ is of pure
  codimension 1. 
  Moreover, since the $Z_i$ are isomorphic outside
  codimension 2, the set of exceptional divisors is the same for all
  $\psi_i$.
\item[(ii)] We say that an irreducible divisor $E$ is an exceptional
  divisor of the second kind if it is a strict transform to $Y$ of a
  (divisorial) component of an exceptional locus of a birational
  morphism (divisorial contraction) of a $Z_i$. In other words,
  cf.~\cite[1.11(5)]{MDS}, $E$ is a strict transform of a non-movable
  divisor from $Z$.
\end{itemize}

\subsection{Stabilized multiplicities}\label{multStab}
Let $\psi:Y\to Z$ be a proper, birational morphism and $E\subseteq Y$
a prime divisor. Then, in the toric case we used in
(\ref{pToriCoxTh}) and (\ref{dualPolCox}) the muliplicities
$\lambda_E(P)=\mult_E(\psi^*P)$ of a divisor $\psi^*P$ in the general point
of $E$ in $Z$.%
\\[1ex]
In \cite[\S2]{asymptotic} there is a stable version
of these multiplicities. At least for big divisors $P$,
one defines $\mult_E^{\stable}(\psi^*P)$ 
either as the $E$-multiplicity of the stable base locus of $P$ or,
by \cite[Lemma 3.3]{asymptotic}, as
$$
\mult_E^{\stable}(\psi^*[P]):= \inf_{D\in |P|_\Q} \mult_E(\psi^*D)
\leq \mult_E(\psi^*P).
$$
Here $D\in|P|_\Q$ means that $D$ is an (effective) $\Q$-divisor
with $mD\in|mP|$ for $m\gg 0$.
Finally, it follows from \cite[Theorem D]{asymptotic} that
for a Mori Dream Space $Z$ the stable multiplicity function
$\mult_E^{\stable}:=\mult_E^{\stable}\circ\psi^*$ 
can be extended to a concave, fanwise linear function
on $\Eff(Z)\subseteq\Cl(Z)_\Q$. 
We have the following immediate consequence of Lemma
\ref{pwl<->polyhedra}.
\begin{corollary}
\label{cor-multDelta}
Let $Z$ be an MDS and $\psi: Y\rightarrow Z$ the birational morphism
from the Chow quotient of $\Cox(Z)$. Let $E\subseteq Y$ a prime
divisor.  Then \vspace{-0.2ex}
$$
\textstyle
\renewcommand{\arraystretch}{1.7}
\begin{array}{rcl}
\nabla_E
&:=&
\conv\big\{\frac{[P]}{\mult^{\stable}_E\psi^*[P]}
\kSt [P]\in\Eff Z\big\} \subseteq\Cl(Z)_\Q
\vspace{-1.5ex}
\end{array}
$$
and
$$
\textstyle
\renewcommand{\arraystretch}{1.7}
\begin{array}{rcl}
\Delta_E
&:=&
\{C\in\Cl^*(Z)_\Q\kst C\geq -\mult^{\stable}_E\}
\vspace{-1.5ex}
\end{array}
$$
are mutually dual polyhedra with $\pwl(\Delta_E)=-\mult^{\stable}_E$.
Moreover, if $Z$ is toric, then they coincide with those from 
{\rm (\ref{dualPolCox})}.
\vspace{-2ex}
\end{corollary}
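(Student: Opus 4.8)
The plan is to read Corollary~\ref{cor-multDelta} as the instance of Lemma~\ref{pwl<->polyhedra} obtained by taking $V=\Cl^*(Z)_\Q$ (so that $V^*=\Cl(Z)_\Q$), the cone $\altcone:=\Eff(Z)\subseteq V^*$, and the function $f:=\mult^{\stable}_E$. First I would check that $f$ satisfies the hypotheses of that lemma. It is defined and fanwise linear on $\Eff(Z)$ by the discussion in (\ref{multStab}), which invokes \cite[Thm.~D]{asymptotic}; it takes values in $\Q_{\geq 0}$ since it is an order of vanishing; and it is concave in the required sense, i.e.\ $\mult^{\stable}_E(u_1+u_2)\leq \mult^{\stable}_E(u_1)+\mult^{\stable}_E(u_2)$, because the product of sections of $\cO(u_1)$ and $\cO(u_2)$ adds their orders along $E$. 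The fanwise linearity is exactly what guarantees that $\nabla_f$ is polyhedral rather than merely a convex body (cf.\ the remark following Lemma~\ref{pwl<->polyhedra}).

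With the hypotheses in place, the two polyhedra of the statement are recognized as the ones produced by the lemma. On the one hand $\nabla_f=\conv\{f(v)^{-1}\cdot v\kst v\in\Eff(Z)\}$ is literally $\nabla_E$: since $f$ is homogeneous the point $f(v)^{-1}v$ depends only on the ray through $v$, the convention $0^{-1}v=\Q_{\geq 0}\cdot v$ covers the directions where $f$ vanishes, and $v$ runs through all effective classes $[P]$. On the other hand, unwinding the polyhedral duality of (\ref{dualPol}), the set $\nabla_E\dual=\{C\kst \langle\nabla_E,C\rangle\geq -1\}$ turns, over the generators $f(v)^{-1}v$, into the family of inequalities $\langle C,v\rangle\geq -f(v)$ for $v\in\Eff(Z)$ (the tail rays imposing $\langle C,v\rangle\geq 0$ where $f(v)=0$); this is precisely the description $\Delta_E=\{C\kst C\geq -\mult^{\stable}_E\}$. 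Hence $\Delta_E=\nabla_E\dual$, mutual duality $\nabla_E=\Delta_E\dual$ follows from $(\PD\dual)\dual=\PD$ of (\ref{dualPol}), and Lemma~\ref{pwl<->polyhedra} gives $\pwl(\Delta_E)=-f=-\mult^{\stable}_E$. This disposes of all assertions except the toric comparison.

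For the final clause I would match these objects with the ones built in (\ref{dualPolCox}) from $\lambda_E(P)=\mult_E(\psi^*P)$. There the dual polyhedron is cut out by $\langle\kbb,[P]\rangle\geq -\lambda_E(P)$, so by linear programming duality its associated function $-\pwl$ is $u\mapsto \min\{\sum_P y_P\lambda_E(P)\kst y_P\geq 0,\ \sum_P y_P[P]=u\}$, i.e.\ the least value of $\mult_E(\psi^*D)$ among torus-invariant effective representatives $D=\sum_P y_P\,P$ of $u$. Comparing with $\mult^{\stable}_E(u)=\inf_{D\in|u|_\Q}\mult_E(\psi^*D)$, one inequality is immediate, since restricting the infimum to torus-invariant $D$ can only raise it. It therefore remains to see that torus-invariant divisors already compute the stable multiplicity; granting this, $\mult^{\stable}_E$ equals the function of (\ref{dualPolCox}), and a short convexity computation then identifies the corresponding $\nabla_E$ and $\Delta_E$ with those of (\ref{dualPolCox}).

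I expect this reverse inequality to be the \emph{only} real obstacle, though it is classical toric input. On a toric $Z$ each $\kG(Z,\cO(mP))$ has a basis of $T$-eigensections (monomials), each cutting out a torus-invariant member of $|mP|$; since $\ord_E$ is a valuation, the order of an arbitrary section is at least the minimum of the orders of the monomials occurring in it, so a member of $|mP|$ of least $E$-order may be chosen torus-invariant. Dividing by $m$ and letting $m\to\infty$ yields $\mult^{\stable}_E(u)=\min\{\sum_P y_P\lambda_E(P)\kst\ldots\}$, completing the comparison; the polyhedrality and mutual duality of the toric $\nabla_E,\Delta_E$ are then those already recorded in (\ref{dualPolCox}).
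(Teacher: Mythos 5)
Your proposal is correct and follows essentially the same route as the paper, which derives the corollary as an immediate instance of Lemma~\ref{pwl<->polyhedra} applied to the concave, fanwise linear function $\mult^{\stable}_E\circ\psi^*$ on $\Eff(Z)$ furnished by \cite[Thm.~D]{asymptotic}. Your closing argument for the toric clause (linear programming duality plus the fact that $T$-eigensections compute the minimal order along a torus-invariant $E$, so the stable multiplicity is attained on torus-invariant representatives) is a correct justification of a point the paper asserts without proof.
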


\subsection{The Cox p-divisor of an MDS}\label{mainRes}
Now we are able to present the p-divisor $\pDiv_{\Cox}$ describing
the Cox ring of an MDS. As in Definition \ref{def-pToriCoxThSurf},
we split $\pDiv_{\Cox} = \psi^* + \pDiv_{\Cox}'$.

\begin{theorem}
\label{th-mainRes}
The part $\pDiv_{\Cox}'$ of the p-divisor 
of the Cox ring of a MDS equals
$$
\pDiv_{\Cox}'= 
\sum_{E\subset Y}\Delta_{E}\otimes E,
$$ 
where the coefficients $\Delta_{E}$ are defined in Corollary
\ref{cor-multDelta}, and the sum is formally taken over all divisors
$E\subset Y$.  However, if $E$ is not one of the finitely many
exceptional divisors from {\rm (\ref{introChow})(i)} or {\rm (ii)},
then the corresponding coefficient is trivial, i.e.\
$\Delta_E=\tail\pDiv=\Eff(Z)\dual\subseteq\Cl(Z)^*_\Q$, anyway.
\vspace{-2ex}
\end{theorem}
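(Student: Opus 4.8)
The plan is to produce $\psi^*+\sum_E\Delta_E\otimes E$ directly as a p-divisor on the Chow quotient $Y$ which encodes the ring $\Cox(Z)=\bigoplus_{u\in\Eff(Z)\cap M}\kG(Z,\cO(u))$, and then to appeal to the uniqueness half of the Altmann--Hausen correspondence (the theorem of \cite{toral} quoted in \S\ref{pDivDef}) to identify it with $\pDiv_{\Cox}$. Since the splitting of Definition~\ref{def-pToriCoxThSurf} hides the principal--divisor ambiguity inside $\psi^*\in\gHom(M,\Pic^\Q(Y))$, it will be enough to pin down the evaluation $\pDiv_{\Cox}(u)$ for each lattice point $u\in\Eff(Z)\cap M$. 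Using $\pwl(\Delta_E)=-\mult^{\stable}_E$ from Corollary~\ref{cor-multDelta}, the candidate evaluation is the $\Q$--divisor
$$
\pDiv_{\Cox}(u)\;=\;\psi^*u-\sum_E\mult^{\stable}_E(u)\,E,
$$
the moving (positive) part of $\psi^*u$ in the sense of \cite{asymptotic}; here $\psi^*u$ denotes the $\Q$--Cartier pullback of a Weil representative $D_u$ of $u$, which exists since $Z$ is $\Q$--factorial.

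The core step will be the identity of graded pieces $\kG(Y,\cO_Y(\pDiv_{\Cox}(u)))=\kG(Z,\cO(u))$, which I would verify with rational functions, using $\kk(Y)=\kk(Z)$ and $\div_Y(f)=\psi^*\div_Z(f)$. Writing $D_f:=\div_Z(f)+D_u$, one direction runs: if $f\in\kG(Z,\cO(u))$ then $D_f\geq 0$ lies in $|u|$, so $\psi^*D_f=\div_Y(f)+\psi^*u\geq 0$ and, by the infimum description of stable multiplicities recalled in \S\ref{multStab}, $\mult_E(\psi^*D_f)\geq\mult^{\stable}_E(u)$ for every $E$; as the left-hand side is an integer this forces $\mult_E(\psi^*D_f)\geq\lceil\mult^{\stable}_E(u)\rceil$, whence $\div_Y(f)+\lfloor\pDiv_{\Cox}(u)\rfloor\geq 0$ and $f\in\kG(Y,\cO_Y(\pDiv_{\Cox}(u)))$. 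Conversely, if $\div_Y(f)+\lfloor\pDiv_{\Cox}(u)\rfloor\geq 0$ then a fortiori $\psi^*D_f=\div_Y(f)+\psi^*u\geq 0$, because the subtracted divisor $\sum_E\mult^{\stable}_E(u)\,E$ is effective; pushing forward along the birational $\psi$ and using $\psi_*\psi^*D_f=D_f$ gives $D_f\geq 0$, i.e.\ $f\in\kG(Z,\cO(u))$. Summation over $u$ then yields the ring isomorphism, so the candidate encodes $\Cox(Z)$.

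Two points remain. For the finiteness claim I would observe that a prime divisor $E\subset Y$ which is neither of the first nor of the second kind in the sense of \S\ref{introChow} is a strict transform of a divisor of $Z$ that appears in no divisorial stable base locus, so $\mult^{\stable}_E\equiv 0$ on $\Eff(Z)$; by Corollary~\ref{cor-multDelta} its coefficient then degenerates to $\Delta_E=\Eff(Z)\dual=\tail\pDiv$, a trivial coefficient, leaving only the finitely many exceptional divisors of \S\ref{introChow} with nontrivial $\Delta_E$. The step I expect to be the real obstacle is verifying that the candidate is an honest p-divisor at all, i.e.\ that the evaluations $\pDiv_{\Cox}(u)$ are semiample (and big for $u$ big); this is where the MDS-specific input enters, through the retraction $\Eff(Z)\to\Mov(Z)=\bigcup_i\Nef(Z_i)$ followed by pullback along $\psi_i\colon Y\to Z_i$ (cf.\ \S\ref{introMDS} and Corollary~\ref{Cox-pwl-map}), under which $\pDiv_{\Cox}(u)$ becomes the pullback of a class that is nef, hence, on an MDS, semiample, on some model $Z_i$. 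Granting this semiampleness, which rests on the facts that nef equals semiample and that the chamber decomposition of $\Mov(Z)$ is finite, the uniqueness in \cite{toral} identifies $\psi^*+\sum_E\Delta_E\otimes E$ with $\pDiv_{\Cox}$ and the theorem follows.
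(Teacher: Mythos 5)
Your overall strategy is sound and in fact runs parallel to the paper's: both arguments come down to showing that the candidate evaluation $\pDiv_{\Cox}(u)=\psi^*u-\sum_E\mult^{\stable}_E(\psi^*u)\,E$ has the right section spaces and is semiample. Your verification of the graded pieces is a genuinely different (and cleaner) route: you work directly with rational functions, $\div_Y(f)=\psi^*\div_Z(f)$, and the infimum characterization of $\mult^{\stable}_E$, whereas the paper deduces $\kG(Y,\pDiv(D))\cong\kG(Z,D)$ from the identity $\pDiv(D)=\psi_i^*(\mov D)$ and $|{\mov(D)}|=|D|$. Two small caveats on your version: the parenthetical integrality/rounding step is unnecessary and not literally correct (since $\psi^*D_u$ is only a $\Q$-divisor, $\mult_E(\psi^*D_f)$ need not be an integer; but the inequality $\mult_E(\psi^*D_f)\geq\mult^{\stable}_E(u)$ already gives what you want once you round $\pDiv(u)$ itself); and the infimum description of $\mult^{\stable}_E$ is only available for big classes, so on $\partial\Eff(Z)$ you need the same limiting/fanwise-linearity argument the paper invokes via \cite[Theorem 2.2.26]{LazI}. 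Your finiteness argument matches the paper's.

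The genuine gap is the step you explicitly ``grant'': semiampleness (and bigness) of the evaluations. This is not a routine verification tacked on at the end --- it is the substantive content of the paper's proof. To see that $\psi^*u-\sum_E\mult^{\stable}_E(\psi^*u)\,E$ is semiample one must prove the divisor-level identity $\pDiv(u)=\psi_i^*(\mov u)$ for the model $Z_i$ with $\mov(u)\in\Nef(Z_i)$, and since your candidate is written using $\psi=\psi_0$ while the semiample class lives on $Z_i$, this forces you to first prove that the expression $\psi_i^*(D)-\sum_E\mult^{\stable}_E(\psi_i^*D)\,E$ is independent of the model $Z_i$ (the paper's claim $\pDiv_i(D)=\pDiv_j(D)$, proved by comparing strict transforms of $\Q$-Cartier divisors under maps that are isomorphisms in codimension one), and then carry out the bookkeeping separating exceptional divisors of the first and second kind in $\psi_i^*(\fix D)$. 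Citing Corollary~\ref{Cox-pwl-map} here is circular, as that corollary is extracted from the proof of the theorem. So your proposal correctly identifies all the ingredients but leaves unexecuted precisely the computation that constitutes the heart of the published argument.
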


\begin{proof}
  We will treat all SQM models $Z_i$ on equal footing, i.e.\ we
  consider $\pDiv_i:=\psi_i^*+\pDiv'_i$ with $\pDiv'_i:=\sum_{E\subset
    Y} \Delta_E^i\otimes E$ and $\Delta_E^i:= \{C\in\Cl^*(Z_i)_\Q\kst
  \langle C,\,[P]\rangle \geq -\mult_E\psi_i^*P\}$.  Since the
  divisors on $Z_i$ are identified, via the strict transform, with
  those on $Z$, we can compare the $\pDiv_i$ as functions
  $\pDiv_i:\Div(Z)=\Div(Z_i)\to \CaDiv_\Q(Y)$.  Taking, as we did in
  Corollary~\ref{cor-multDelta}, the function
  $\mult_E^{\stable}\circ\psi_i^*$ for the fanwise linear map $f$ in
  (\ref{pwlinear}), we obtain that $\pDiv_i(D)=\psi_i^*(D)
  -\sum_{E\subset Y} \mult_E^{\stable} \psi_i^*(D)\cdot E$ for
  $D\in\Div Z$.

  We claim that $\pDiv_i(D)=\pDiv_j(D)$. Indeed, since the
  multiplicities of $D$ along divisors $E$ contained in $Z$ (isomorphic
  in codimension 1 to $Z_i$ and $Z_j$) are the same, we conclude that
  the difference $\pDiv_i(D)-\pDiv_j(D)$ is supported on divisors
  contracted by $\psi$; more precisely we get
$$
\begin{array}{rcl}
\pDiv_i(D)-\pDiv_j(D)&=&\left(\psi_i^*(D)-\sum_{E\subset Exc(\psi)} 
\mult_E^{\stable} \psi_i^*(D)\cdot E\right)\\
&-&\left(\psi_j^*(D)-\sum_{E\subset Exc(\psi)} 
\mult_E^{\stable} \psi_j^*(D)\cdot E\right).
\end{array}
$$
But $\psi_i^*(D)-\sum_{E\subset Exc(\psi_i)} \mult_E \psi_i^*(D)\cdot
E$ is the strict transform of the $\Q$-Cartier divisor $D$ from $Z_i$ to
$Y$ via birational $\psi_i: Y\rightarrow Z_i$; hence, again by
isomorphism in codimension 1, it is the same for $\psi_j:Y\rightarrow
Z_j$. Thus, passing to the limit from $\mult_E$ to $\mult_E^{\stable}$, we
get the conclusion of our claim.

Let us recall that, by \cite[Prop 1.11(5)]{MDS}, every big divisor
$D\in \Div Z$, possibly replaced by its multiple, admits a canonical
splitting $\,D=\mov(D)+\fix(D)$ into the stable movable and fixed
part, respectively.  Moreover, there is an SQM model $Z_i$ such that
$\,\mov(D)\in \Nef(Z_i)$, i.e.\ $\mov(D)$ is semiample on $Z_i$. Thus,
the linear system $|\mov(D)|$ can be assumed base-point-free so that
it defines a contraction of $Z_i$ such that the support of $\fix(D)$
is in the exceptional locus of the contraction.  If $E_\nu\subseteq
Z_i$ denote divisors contracted by $|\mov(D)|$, then, by definition,
$\fix(D)=\sum_\nu \mult_{E_\nu}^{\stable}(D)\cdot E_\nu$.  We note
that we can write $\mult_{E_\nu}^{\stable}(D)= \mult_{E_\nu}(D)$
because $|\mov(D)|$ is base-point-free and $D\in |D|=|\mov(D)|$ can be
chosen general. Thus,
$$
\renewcommand{\arraystretch}{1.3}
\begin{array}{rcl}
\psi_i^*(D)
&=&
\psi^*_i(\mov (D)) + \sum_\nu \mult_{E_\nu}^{\stable}(D)\cdot
\psi^*_i(E_\nu)\\
&=&
\psi^*_i(\mov (D)) + \sum_\nu \mult_{E_\nu}^{\stable}(D)\cdot
\big(\widehat{E_\nu} + \sum_{E\subset Exc(\psi_i} 
\mult_E(\psi_i^*E_\nu)\cdot E\big)
\end{array}
$$
with $\widehat{E_\nu}\subseteq Y$ denoting the strict transform via
$\psi_i^*$ of $E_\nu$, i.e.\ being an exceptional divisor of the
second kind, and the second summation is restricted to exceptional
divisors of the first kind only.  In particular,
$\psi_i^*(D)-\psi^*_i(\mov (D))$ is supported exclusively on
exceptional divisors (of both kinds). On the other hand, as the pull
back of a semiample divisor, $\psi^*_i(\mov (D))$ does not contain
exceptional components at all when $D$ is general in its linear
system.  Thus,
$$
\renewcommand{\arraystretch}{1.3}
\begin{array}{rcl}
\psi_i^*(D)
&=&
\psi^*_i(\mov D) + \sum_{E\subset Y} \mult_E^{\stable}(\psi_i^*D)
\cdot E,
\end{array}
$$
and therefore, if $\pDiv(D)$ denotes the mutually equal
  $\pDiv_i(D)$, we obtain that $\pDiv(D)=\psi^*_i(\mov (D))$, and
$\pDiv(D)$ inherits the semiampleness from $\mov(D)$ on $Z_i$.

Eventually, since $|\mov(D)|=|D|$ the natural inclusion map
$\iota_i:\kG(Y,\pDiv(D))=\kG(Y,\psi^*_i(\mov D))\to
\kG(Y,\psi^*_i(D))=\kG(Z,D)$ becomes an isomorphism.  Since both maps
$D\mapsto\pDiv(D)$ and $D\mapsto\psi_i^\ast(D)-
\sum_E\mult^{\stable}_E(\psi_i^*D)\cdot E$ are piecewise linear, this
extends to the whole effective cone being the closure of the cone of
big divisors, cf.\ \cite[Theorem 2.2.26]{LazI}.  In particular,
$\pDiv$ is a decent p-divisor with $\kG(Y,\pDiv(D)) \to\kG(Z,D)$ being
an isomorphism for every $D\in\Eff(Z)\cap\Cl(Z)$; hence
$$
\bigoplus_{D\in Cl(Z)}\kG(Z,D) = \bigoplus_{D\in Cl(Z)}\kG(Y,\pDiv(D))
$$
gives a presentation of $\Cox(Z)$ as a p-divisor.
\end{proof}

The arguments in the proof of Theorem \ref{th-mainRes} yield the
following observation (cf.~the remark following 
Definition~\ref{def-pToriCoxThSurf}).
\begin{corollary}\label{Cox-pwl-map}
  The fanwise linear map $\pDiv_{\Cox}:\Eff(Z)\to\nef(Y)$ associated
  to p-divisor $\pDiv_{\Cox}$ is a composition of a fanwise linear
  retraction $\Eff(Z)\to\Mov(Z)$ and a fanwise linear map
  $\Mov(Z)\to\Nef(Y)$ whose restriction to the cone $\Nef(Z_i)$, for
  every SQM model $Z_i$, coincides with the pull-back map $\psi^*_i:
  \Nef(Z_i)\to\Nef(Y)$.
\end{corollary}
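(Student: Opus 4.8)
The plan is to read off the two factors directly from the proof of Theorem~\ref{th-mainRes}, where the evaluation $\pDiv_{\Cox}(D)=\pDiv(D)$ was already identified with $\psi_i^*(\mov D)$ for a suitable SQM model $Z_i$ (namely one with $\mov(D)\in\Nef(Z_i)$). The asserted factorization is then essentially a repackaging of that formula: the retraction is the map sending a divisor to its movable part, and the second map is pull-back along the various $\psi_i$. So the work is to isolate these two maps, check that each is fanwise linear, and verify that the second one is consistently defined across the chambers of $\Mov(Z)$.

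First I would define the retraction $r\colon\Eff(Z)\to\Mov(Z)$ by $r(D)=\mov(D)$. By \cite[Prop.~1.11(5)]{MDS} every big $D$ admits the canonical splitting $D=\mov(D)+\fix(D)$ with $\fix(D)=\sum_\nu\mult^{\stable}_{E_\nu}(D)\cdot E_\nu$; since each $\mult^{\stable}_{E}$ is concave and fanwise linear on $\Eff(Z)$ (the input from \cite[Theorem~D]{asymptotic} recalled before Corollary~\ref{cor-multDelta}), the map $\fix$, and hence $r=\id-\fix$, is fanwise linear, and extends by continuity to all of $\Eff(Z)$. That $r$ is a retraction onto $\Mov(Z)$ is immediate, since $\fix(D)=0$ exactly when $D$ is movable.

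Second I would define $q\colon\Mov(Z)\to\Nef(Y)$ chamberwise by $q|_{\Nef(Z_i)}=\psi_i^*$, using the decomposition $\Mov(Z)=\bigcup_i\Nef(Z_i)$. The image lands in $\Nef(Y)$ because for $D\in\Nef(Z_i)$ the class $\mov(D)=D$ is semiample on $Z_i$ and the pull-back of a semiample divisor is semiample, hence nef. Granting that $q$ is well defined and fanwise linear, the composition satisfies $q(r(D))=\psi_i^*(\mov D)=\pDiv(D)=\pDiv_{\Cox}(D)$ for the model $Z_i$ with $\mov(D)\in\Nef(Z_i)$, which is precisely the evaluation established in Theorem~\ref{th-mainRes}; this yields $\pDiv_{\Cox}=q\circ r$.

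The one point needing genuine verification, and the step I expect to be the main obstacle, is that $q$ is well defined, i.e.\ that $\psi_i^*$ and $\psi_j^*$ agree on a shared wall $\Nef(Z_i)\cap\Nef(Z_j)$. Here I would specialize the identity $\pDiv_i(D)=\pDiv_j(D)$ proved in Theorem~\ref{th-mainRes}: for a class $D$ in the wall one has $\mov(D)=D$ and $\fix(D)=0$, so the formula $\pDiv_i(D)=\psi_i^*(\mov D)$ reduces to $\psi_i^*(D)=\pDiv_i(D)=\pDiv_j(D)=\psi_j^*(D)$. Conceptually this reflects that $Z_i$ and $Z_j$ are isomorphic in codimension one and that $D$, being nef (hence semiample) on both, pulls back to the common strict transform on $Y$ independently of the model. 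Gluing the $\psi_i^*$ across walls therefore produces a single fanwise linear map $q\colon\Mov(Z)\to\Nef(Y)$, and combined with the fanwise linear retraction $r$ this gives the desired factorization.
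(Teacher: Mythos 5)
Your proposal is correct and matches the paper, which proves this corollary simply by pointing back to the arguments in the proof of Theorem~\ref{th-mainRes}; you extract exactly those ingredients (the splitting $D=\mov(D)+\fix(D)$ with fanwise linear stabilized multiplicities, the evaluation $\pDiv(D)=\psi_i^*(\mov D)$, and the model-independence $\pDiv_i(D)=\pDiv_j(D)$ for the agreement of $\psi_i^*$ and $\psi_j^*$ on walls). Nothing further is needed.
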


\subsection{Example: Blowing up two points in $\PP^3$}\label{extwoP3}
This is perhaps the simplest three-dimensional
example to illustrate Corollary
\ref{Cox-pwl-map}.
Let $Z$ be the blow-up of $\bP^3$ in two points,
say $x_1$ and $x_2$, with exceptional divisors denoted by $E_1$ and
$E_2$. The strict tranform of a general plane, a plane passing through
each of these points, and a plane passing through both of them, define
divisors whose classes span $\Mov(Z)$. The rational maps defined by
these divisors are onto $\bP^3$, $\bP^2$ and $\bP^1$, respectively. The
flop along the strict tranform of the line passing through $x_1$ and
$x_2$ yields another SQM model, let us call it $Z_1$. 
The variety $Y$ results from blowing up this strict tranform.

Now the following picture presents sections of cones in spaces of
divisor classes. The 3-dimensional cone $\Eff(Z)$ presented on the
left hand side gets retracted to $\Mov(Z)$: the regions on which the
retraction is linear are denoted by dotted line segments. Next
$\Mov(Z)=\Nef(Z)\cup\Nef(Z_1)$ is mapped linearly on each $\Nef$ cone to
two 3-dimensional faces of the 4-dimensional cone $\Nef(Y)$.
$$
\begin{xy}<38pt,0pt>:
(0,-0.4)*={[E_2]}, (2,3.2)*={\bP^1},
(4,-0.4)*={[E_1]}, (3.1,1.3)*={\bP^2},
(0.9,1.3)*={\bP^2}, (2,0.7)*={\bP^3}, 
(0,0)*={}="e2" ; (4,0)*={}="e1" **@{.},
(2,3)*={\bullet}="e0" ; "e1" **@{.},
"e0" ; "e2" **@{.},
(1,1.5)*={\bullet}="a" ; "e1" **@{.}, 
(3,1.5)*={\bullet}="b"; "e2" **@{.}, %
(2,1)*={\bullet}="h" ; "b" **@{-},
"h" ; "a" **@{-}, "e0" ; "a" **@{-}, "e0" ; "b" **@{-},
(2,1.7)*={\Mov(Z)},
(3.5,1.5)*={\longrightarrow},
(5,3.2)*={\bP^1},
(6.1,1.3)*={\bP^2},
(4,1.3)*={\bP^2}, (5,0.7)*={\bP^3}, 
(5,3)*={\bullet}="e01" ; (4,1.5)*={\bullet}="a1" **@{-}, 
(6,1.5)*={\bullet}="b1"; "a1" **@{-},
(5,1)*={\bullet}="h1" ; "b1" **@{-},
"h1" ; "a1" **@{-}, "e01" ; "a1" **@{-}, "e01" ; "b1" **@{-},
(5,2)*={\scriptstyle\Nef(Z_1)}, (5,1.3)*={\scriptstyle\Nef(Z)},
(6.7,1)*={\longrightarrow}, (6.7,1.2)*={\psi^*}, 
(6.7,2)*={\longrightarrow}, (6.7,2.2)*={\psi_1^*},
(11,3.2)*={\bP^1},
(9.6,1.3)*={\bP^2},
(7.5,1.3)*={\bP^2}, (11,0.7)*={\bP^3}, 
(11,3)*={\bullet}="e02" ; (7.5,1.5)*={\bullet}="a2" **@{-}, 
(9.5,1.5)*={\bullet}="b2"; "a2" **@{-},
(11,1)*={\bullet}="h2" ; "b2" **@{-},
"h2" ; "a2" **@{-}, "e02" ; "a2" **@{-}, "e02" ; "b2" **@{-},
(10.4,1.6)*={\Nef(Y)}, 
"e02" ; "h2" **@{.}
\end{xy}
$$
We note that only two of the four faces of the tetrahedron
representing the section of the 4-dimensional cone $\Nef(Y)$ are
associated to SQM models of $Z$. The other two faces represent
contractions of $Y$ to $\bP^3$ blown up at one point ($x_1$ or $x_2$)
and then along the strict transform of the line passing through $x_1$
and $x_2$. This is equivalent to blowing up the line first and then
blowing up the fiber of the exceptional divisor above $x_1$ or $x_2$. In
particular, the dotted edge of the tetrahedron represents 
the contraction of
$Y$ to $\bP^3$ blown up along the line passing through $x_1$ and
$x_2$.

\section{Surfaces} \label{surf}

\subsection{Specializing the general result}\label{genSurf}
The case of ($\Q$-factorial MD) surfaces $Z=S$ is special for two
reasons. First, it does not require the pull back to the Chow
quotient, i.e.\ $Y=Z=S$ with $\,\psi=\psi_i=\id$, and $\cD:
\eff(S)\rightarrow\nef(S)$ simply reflects the Zariski decomposition.
Indeed, given any effective divisor $D$ on $S$, we can write it
uniquely as the sum $D\equiv P+\sum_i a_iE_i$ where $P\in\Nef(S)$,
$E_i$ are exceptional curves (if there are any) such that $(P\cdot
E_i)=0$, and coefficients $a_i=\mult^{\stable}_{E_i}D$. Thus $P=\pDiv(D)$.

\vspace{-2ex} Second, the $\Q$-valued intersection product, denoted
simply by a dot, allows one to identify vector spaces
$\Cl(S)^*_\Q=\Cl(S)_\Q$ with $\langle C_1,C_2\rangle =(C_1\cdot
C_2)$. In particular, the polyhedral coefficients $\Delta_E$ will be
contained in $\Cl(S)_\Q$ now and have $\Nef(S)=\Eff(S)\dual$ as their
common tail cone.  
If $S$ is smooth, then we even know that
  $\Cl(S)^*=\Cl(S)$.  In general, this equation has to be replaced by
  $\Cl(S)^*=\{D\in\Cl(S)_\Q\kst \langle D,\Cl(S)\rangle\subset\Z\}$.  
Finally, we recognize the (finitely
many) exceptional divisors $E_i\subseteq S$ by their negative self
intersection numbers $(E_i^2)$.

\begin{theorem}
\label{pToriCoxTheoremSurf}
Let $S$ be an MD-surface with the exceptional divisors
$E_i\subset S$. Then,
$\pDiv'_{\Cox}=\sum_i \Delta_i\otimes E_i$
with 
$$
\Delta_i=\{D\in\Eff(S)\kst 
(D\cdot E_i)\geq -1\ \ 
{\rm and}\ \ (D\cdot E_j)\geq 0\ \ {\rm for}\ \ 
j\ne i \},
$$
and the dual coefficients equal
$
\,\nabla_i = \ko{0E_i} + 
\sum_{j\neq i} \Q_{\geq 0} [E_j] +\Nef(S).
$
\\
{\rm (With $\ko{0E_i}$ we denote the line segment
connecting $0$ and $[E_i]$ inside $\Cl_\Q(S)$.)}
\vspace{-2ex}
\end{theorem}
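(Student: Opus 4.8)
The plan is to specialize the general Theorem~\ref{th-mainRes} together with Corollary~\ref{cor-multDelta} to the surface situation set up in (\ref{genSurf}). Since there $Y=S$ and $\psi=\psi_i=\id$, there are no exceptional divisors of the first kind, so the only divisors carrying a nontrivial coefficient are the non-movable ones, i.e.\ the finitely many negative curves $E_i$; this already gives $\pDiv'_{\Cox}=\sum_i\Delta_i\otimes E_i$ with $\Delta_i=\{C\in\Cl(S)_\Q\mid C\geq-\mult^{\stable}_{E_i}\}$. Under the intersection identification $\Cl(S)^*_\Q=\Cl(S)_\Q$ this reads
$$
\Delta_i=\{C\in\Cl(S)_\Q\mid (C\cdot D)\geq-\mult^{\stable}_{E_i}(D)\ \text{for all } D\in\Eff(S)\},
$$
so the whole problem reduces to understanding the concave, fanwise linear function $\mult^{\stable}_{E_i}$ on $\Eff(S)$. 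By (\ref{genSurf}) its value $\mult^{\stable}_{E_i}(D)$ is exactly the coefficient of $E_i$ in the negative part of the Zariski decomposition $D=P_+(D)+\sum_k a_k(D)\,E_k$.

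Next I would record the values of $\mult^{\stable}_{E_i}$ on the generators that will turn out to matter. For a nef class the Zariski decomposition is the class itself, so $\mult^{\stable}_{E_i}$ vanishes on $\Nef(S)$. For an exceptional curve $E_j$, irreducibility together with $(E_j^2)<0$ forces the positive part of its Zariski decomposition to be zero, so its negative part is $E_j$ itself and hence $\mult^{\stable}_{E_i}(E_j)=\delta_{ij}$.

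With these values in hand, $\Delta_i$ is computed by two inclusions. For ``$\subseteq$'' I would test the defining inequality $(C\cdot D)\geq-\mult^{\stable}_{E_i}(D)$ on special classes: $D=E_i$ gives $(C\cdot E_i)\geq-1$; $D=E_j$ with $j\neq i$ gives $(C\cdot E_j)\geq0$; and $D$ ranging over $\Nef(S)$ gives $(C\cdot D)\geq0$, i.e.\ $C\in\Nef(S)\dual=\Eff(S)$. For ``$\supseteq$'', given such a $C$ and an arbitrary effective $D$ with Zariski decomposition as above, I would expand
$$
(C\cdot D)=(C\cdot P_+(D))+\sum_k a_k(D)\,(C\cdot E_k)\ \geq\ -a_i(D)\ =\ -\mult^{\stable}_{E_i}(D),
$$
using $(C\cdot P_+(D))\geq0$ (effective paired with nef), $a_k(D)\geq0$, and the sign constraints on $(C\cdot E_k)$. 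This establishes the claimed description of $\Delta_i$. The point I expect to be the main obstacle is precisely this ``$\supseteq$'' step: the genuinely infinite system of inequalities indexed by all $D\in\Eff(S)$ must collapse to the finite one, and it does so exactly because the negative part of every Zariski decomposition is supported on the $E_i$; keeping this reduction (and the effective--nef pairing) honest is the heart of the matter.

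Finally, for the dual coefficients I would verify that $R:=\overline{0E_i}+\sum_{j\neq i}\Q_{\geq0}[E_j]+\Nef(S)$ satisfies $R\dual=\Delta_i$, whence $\nabla_i=\Delta_i\dual=R$ by biduality, all polyhedra in sight containing $0$. Writing a general $v\in R$ as $s[E_i]+w$ with $s\in[0,1]$ and $w\in\sum_{j\neq i}\Q_{\geq0}[E_j]+\Nef(S)$, the condition $(v\cdot C)\geq-1$ for all $v\in R$ forces $(w\cdot C)\geq0$ on the whole cone (otherwise scaling $w$ violates the bound), yielding $C\in\Eff(S)$ and $(C\cdot E_j)\geq0$ for $j\neq i$, after which the segment part contributes precisely $(C\cdot E_i)\geq-1$; this is exactly $\Delta_i$. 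As a consistency check one can also match $R$ against the convex-hull description $\nabla_i=\conv\{[P]/\mult^{\stable}_{E_i}[P]\}$ of Corollary~\ref{cor-multDelta}, since each generator decomposes as $[P_+(P)]/a_i(P)+\sum_{k\neq i}(a_k/a_i)[E_k]+[E_i]$ and so visibly lies in $R$. The duality bookkeeping here is formal once $\Delta_i$ is known.
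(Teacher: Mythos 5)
Your proposal is correct and takes essentially the same route as the paper: the paper's own proof is a one-line reformulation of Theorem~\ref{th-mainRes} via Corollary~\ref{cor-multDelta} and Lemma~\ref{pwl-characterization}, resting on exactly the two facts you use, namely the identification of $\mult^{\stable}_{E_i}$ with the Zariski coefficient $a_i$ from (\ref{genSurf}) and the duality of $\Delta_i$ and $\nabla_i$ under the intersection product. You simply make explicit the two inclusions and the duality computation that the paper leaves implicit.
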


\begin{proof}
  This is a reformulation of Theorem \ref{th-mainRes}. $\Delta_i$ and
  $\nabla_i$ are dual with respect to the intersection product.  On
  the other hand, by Lemma \ref{pwl-characterization} the function
  defined by $\nabla_i$ is just $-\mult^{\stable}_{E_i}$.
\end{proof}

\subsection{Del Pezzo surfaces}\label{delPezzo}
Let $S=S_d$ be a smooth del Pezzo surface of degree $d=K_S^2$. By
definition, $-K_S$ is ample. Any such $S_d$ is known to be
$\bP^1\times\bP^1$ ($d=8$) or a blow-up of $\bP^2$ at $\,r:=9-d$
general points. It is known that for $d\leq 7$ the cone $\Eff(S)$ is
generated by a finite number of $(-1)$-curves. In fact, any
non-movable curve on such $S$ is a $(-1)$-curve. 
In this special case, the polyhedral coefficients 
from Theorem \ref{pToriCoxTheoremSurf}
become especially easy:

\begin{corollary}
\label{pToriCoxCor}
If $S$ is a del Pezzo surface
and $E$ a
$(-1)$-curve, then the only vertices of $\Delta_{E}$ are $0$ and
$[E]$.  In particular, the polyhedral coefficients of $\pDiv_{\Cox}'$
are as follows: $\Delta_E=\conv\{0,[E]\}+ \Nef(S)= \ko{0[E]}+\Nef(S)$.
\vspace{-2ex}
\end{corollary}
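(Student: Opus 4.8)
The plan is to specialize Theorem~\ref{pToriCoxTheoremSurf} to the del Pezzo situation, where the geometry of the effective cone collapses the description of $\Delta_E$ down to its two extremal vertices. Recall from Theorem~\ref{pToriCoxTheoremSurf} that $\nabla_E = \ko{0[E]} + \sum_{j\neq i}\Q_{\geq 0}[E_j] + \Nef(S)$, where the sum runs over the other exceptional curves $E_j$. My first step would be to argue that, for a del Pezzo surface, this expression simplifies. The essential input is the stated fact that $\Eff(S)$ is generated by finitely many $(-1)$-curves and, crucially, that every non-movable curve on $S$ is itself a $(-1)$-curve; so the exceptional divisors $E_j$ entering the formula are exactly the $(-1)$-curves, and these generate $\Eff(S) = \head\nabla_E$ together with $\Nef(S)$.

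The heart of the argument is to show that the rays $\Q_{\geq 0}[E_j]$ for $j\neq i$ and the cone $\Nef(S)$ are already absorbed by $\ko{0[E]} + \Nef(S)$ at the level of the \emph{dual} polyhedron $\Delta_E$, equivalently that they do not contribute new vertices to $\nabla_E$ beyond $0$ and $[E]$. The cleanest route is to work on the $\Delta_E$ side directly via Lemma~\ref{pwl<->polyhedra} and Corollary~\ref{cor-multDelta}: the function $\pwl(\Delta_E) = -\mult^{\stable}_E$ is determined by the values $\mult^{\stable}_E(\psi^*[P])$ on generators $[P]$ of $\Eff(S)$, and since $Y=S$ here, these are simply the intersection multiplicities $\mult_E(P)$. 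For a $(-1)$-curve $E$ one computes $\mult^{\stable}_E([P]) = (P\cdot E)_{-}$-type quantities, and the key point is that on a del Pezzo surface the stable multiplicity of $E$ against any effective generator $[P]$ is $1$ when $P=E$ and $0$ otherwise, because two distinct $(-1)$-curves meet non-negatively while $(E^2) = -1$ forces all the fixed-part to concentrate on $E$ alone. This means the only generator $[P]$ with $\mult^{\stable}_E([P])>0$ is $[E]$ itself.

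Concretely, I would compute $\nabla_E = \conv\{[P]/\mult^{\stable}_E([P]) : [P]\in\Eff(S)\}$ from Corollary~\ref{cor-multDelta}: every generator $[P]\neq [E]$ has $\mult^{\stable}_E([P]) = 0$, contributing the ray $\Q_{\geq 0}[P]$, while $[E]$ contributes the vertex $[E]$ together with $0$ (from the convention $v/0 := \Q_{\geq 0}v$ and inclusion of the origin). Since these rays $\Q_{\geq 0}[P]$ together span $\Nef(S)^\vee$-complementary directions making up the rest of $\Eff(S) = \head\nabla_E$, the only genuine vertices of $\nabla_E$ are $0$ and $[E]$. Dualizing, $\Delta_E = \nabla_E^\vee = \conv\{0,[E]\}^\vee \cap (\text{tail contributions}) = \ko{0[E]} + \Nef(S)$, using that $\tail\nabla_E = \Nef(S)$ dualizes to $\head\Delta_E$ and that the head of $\nabla_E$ is $\Eff(S) = \Nef(S)^\vee$.

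The main obstacle I anticipate is the claim that $\mult^{\stable}_E([P]) = 0$ for every extremal $(-1)$-curve $P\neq E$, i.e.\ that $E$ is not in the stable base locus of any other $(-1)$-curve class. This requires the del Pezzo-specific fact that distinct $(-1)$-curves intersect non-negatively and that the Zariski decomposition of a $(-1)$-curve class $[P]$ has $P$ itself as its negative part with no contribution along $E$; equivalently $(P\cdot E)\geq 0$ for $P\neq E$ so that $P$ does not drag $E$ into its fixed locus. Once this non-negativity is in hand — it follows from the classification of $(-1)$-curves on del Pezzo surfaces and the fact that the $E_j$ form the extremal rays of $\Eff(S)$ — the rest is a routine dualization. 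I would therefore organize the proof as: (1) invoke that $\Eff(S)$ is generated by $(-1)$-curves; (2) show $\mult^{\stable}_E([P]) = \delta_{E,P}$ on these generators using intersection-theoretic non-negativity; (3) read off the vertices of $\nabla_E$ from Corollary~\ref{cor-multDelta}; and (4) dualize to obtain $\Delta_E = \ko{0[E]} + \Nef(S)$.
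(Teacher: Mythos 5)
Your steps (1)--(2) are fine: distinct $(-1)$-curves meet non-negatively, each $(-1)$-curve class is rigid, and so $\mult^{\stable}_E([P])=\delta_{E,P}$ on the extremal generators $[P]$ of $\Eff(S)$. The gap is in step (3): Corollary~\ref{cor-multDelta} takes the convex hull of $[P]/\mult^{\stable}_E([P])$ over \emph{all} effective classes $[P]$, and since $\mult^{\stable}_E$ is only concave and piecewise linear on $\Eff(S)$ --- not linear --- its values on the extremal rays do not determine it, hence do not determine $\nabla_E$ or $\Delta_E$. The paper's own Example~\ref{extwoP2} makes this failure explicit: on the surface $S_2$ (two infinitesimally near points blown up), the effective cone is generated by $[E_0],[E_1],[E_2]$ and one checks that $\mult^{\stable}_{E_1}$ is again $\delta_{E_1,\cdot}$ on these three generators (each class is rigid and the curves are distinct), yet $\Delta_{E_1}=\conv\{0,\tfrac12[E_1],[E_1]+[E_2]\}+\Nef(S_2)$, which is strictly different from $\ko{0[E_1]}+\Nef(S_2)$; indeed $[E_1]\notin\Delta_{E_1}$ there because $(E_1^2)=-2$. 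So the Kronecker-delta values on generators are consistent with the conclusion being false, and your "read off the vertices" step cannot work as stated. The vague phrase that $(E^2)=-1$ "forces all the fixed-part to concentrate on $E$ alone" is exactly the substantive claim that needs an argument for arbitrary effective $D$, not just for generators.

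The paper closes this gap by working directly on the $\Delta_E$ side: given $D\in\Delta_E$ (so $(D\cdot E)\geq-1$ and $(D\cdot F)\geq 0$ for other $(-1)$-curves $F$), rescale so $(D\cdot E)=-1$ and prove $D-E$ is nef. Writing $D=eE+fF+P$, the identity $-1=(D\cdot E)=-e+f(F\cdot E)+(P\cdot E)$ gives $e-1\geq f(F\cdot E)\geq 0$, and from this $(D'\cdot F)\geq f\big((E\cdot F)^2-1\big)\geq 0$. Note how $(E^2)=-1$ enters \emph{quantitatively} here (it is what makes $e\geq 1$, i.e.\ guarantees that subtracting a full copy of $E$ keeps the divisor effective); this is the del Pezzo--specific input your argument is missing. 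To repair your approach you would need to establish the piecewise-linear structure of $\mult^{\stable}_E$ on all of $\Eff(S)$ (equivalently, identify the Zariski chambers adjacent to $E$), which amounts to the same computation.
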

\begin{proof}
  If $D\in\Delta_E$, i.e.\ %
  if $D$ is an effective $\Q$-divisor with $(D\cdot E)\geq -1$ and
  $(D\cdot F)\geq -\delta_{EF}$ for $(-1)$-curves $F\ne E$, then we
  have to show that $D\in\ko{0E}+\Nef(S)$.  If $D$ was already nef,
  then we are done. If not, then by rescaling we may assume that
  $(D\cdot E)=-1$, and then we claim that $D':=D-E$ is nef: First,
  $(D'\cdot E)=(D\cdot E)-(E^2)=0$. Then, if $F$ is an arbitrary
  $(-1)$-curve different from $E$, we may write $D=eE+fF+P$ with
  $e,f\geq 0$ and $P$ being effective without $E$ and $F$
  contributions.  Thus,
$$
-1 = (D\cdot E) = -e +f(F\cdot E) + (P\cdot E) \geq -e +f(F\cdot E);
$$
hence $e-1\geq f(F\cdot E)\geq 0$.  This implies that $D'$ is
effective and, moreover,
$$
(D'\cdot F)
\geq (e-1)(E\cdot F)-f \geq f(E\cdot F)^2-f = f\big((E\cdot F)^2-1\big).
$$
If $(E\cdot F)\neq 0$, then we obtain $(D'\cdot F)\geq 0$; in the opposite
case of $(E\cdot F)=0$, 
we simply conclude via $(D'\cdot F)=(D\cdot F)-(E\cdot F)=(D\cdot F)\geq 0$.
\end{proof}

\begin{remark}
  Let $S=S_d$ be a smooth del Pezzo surface of degree $d\leq 7$ which
  is a a blow-up of $\bP^2$ at $\,r:=9-d$ general points; by
  $E_1,\ldots,E_r\subset S$ we denote their preimages.  Then
  $\Cl(S)=\Z H \oplus (\oplus_{i=1}^r \Z E_i)$, hence $\id_{\Cl
    S}=[H]\otimes [H]-\sum_{i=1}^r [E_i]\otimes [E_i]$.  In
  particular, \vspace{-1ex}
$$
\pDiv_{\Cox} = \big([H]+\Nef(S)\big)\otimes H + 
\sum_{i=1}^r \big(\ko{[-E_i]0}+\Nef(S)\big)\otimes E_i
+ \hspace{-0.3em}\sum_{E\notin\{\!E_i\!\}}\hspace{-0.5em} 
\big(\ko{0[E]}+\Nef(S)\big)\otimes E.
\vspace{-2ex}
$$

The above result says that Zariski decomposition on a del Pezzo
surface is orthogonal. That is, given any effective divisor $D$ on $S$,
we can write it uniquely as the sum $D\equiv P+\sum_i a_iE_i$ where
$P\in\Nef(S)$ and $E_i$ are $(-1)$-curves such that $(P\cdot E_i)=0$,
and $a_i=\mult_{E_i}D$ and, moreover, $(E_i\cdot E_j)=0$ if $i\ne j$.
The last of these properties is known and follows from the fact that
the birational morphism of a del Pezzo surface associated to $|mP|$,
$m\gg 0$ contracts disjoint $(-1)$-curves $E_i$. 
\end{remark}

\subsection{Example: Blowing up two points in $\PP^2$}\label{extwoP2}
While the following two examples are just toric, they, nevertheless,
illustrate the special shape of $\pDiv_{\Cox}$ for del Pezzo surfaces
and indicate the difference to a somewhat more general
situation. First, we consider a surface $S_1$ which is an ordinary
blowing up of $\PP^2$ in two points; second we present a surface $S_2$
which is a $\PP^2$ with two infinitesimally near points blown up.
\\[1ex]
The toric surface $S_1$ is given by the fan
$\,\Sigma_1=\{(1,0),\; (1,1),\; (0,1),\; (-1,0),\; (-1,-1)\}$. 
The exceptional divisors of the blowing up are $E_1=\ko{\orb}(1,1)$
and $E_2=\ko{\orb}(-1,0)$ together with
the strict transform $E_0=\ko{\orb}(0,1)$ of the line connecting the
two centers; they are the only $(-1)$-curves in $S_1$.
\\[1ex]
Let $[H]$ denote the pull back of the line in $\PP^2$.
Then $[E_0]=[H]-[E_1]-[E_2]$,
and  the nef cone $\Nef(S_1)$ is formed by the strict transforms
$[A]=[H]-[E_1]=[E_0]+[E_2]$, 
$\,[B]=[H]-[E_2]=[E_0]+[E_1]$, and by $[H]=[E_0]+[E_1]+[E_2]$ itself. The 
ample anti-canonical bundle is $[-K]=3[H]-[E_1]-[E_2]=[A]+[B]+[H]$.
\\[1ex]
\begin{minipage}{15cm} 
\unitlength=0.8mm
\definecolor{cblowup}{rgb}{1.0,.388,.278}
\linethickness{0.4pt}
\hspace*{\fill}
\begin{picture}(70.00,70.00)(0,0)
\put(30.00,30.00){\line(1,0){30.00}}
\put(30.00,30.00){\line(1,1){20.00}}
\put(30.00,30.00){\line(0,1){30.00}}
\put(30.00,30.00){\line(-1,0){30.00}}
\put(30.00,30.00){\line(-1,-1){20.00}}
\put(55.00,50.00){\makebox(0,0)[cc]{$E_1$}}
\put(35.00,65.00){\makebox(0,0)[cc]{$E_0$}}
\put(0.00,35.00){\makebox(0,0)[cc]{$E_2$}}
\put(60.00,25.00){\makebox(0,0)[cc]{$A$}}
\put(17.00,10.00){\makebox(0,0)[cc]{$B$}}
\end{picture}
\hspace*{\fill}
\hspace*{\fill}
\begin{picture}(70.00,60.00)(0,-12)
\put(0.00,0.00){\line(1,0){60.00}}
\put(0.00,0.00){\line(2,3){30.00}}
\put(60.00,0.00){\line(-2,3){30.00}}
\put(0.00,0.00){\line(2,1){45.00}}
\put(60.00,0.00){\line(-2,1){45.00}}
\put(45.00,22.50){\circle*{3.00}}
\put(15.00,22.50){\circle*{3.00}}
\put(30.00,15.00){\circle*{3.00}}
\put(15.00,22.50){\line(1,0){30.00}}
\put(-7.00,00.00){\makebox(0,0)[cc]{$[E_2]$}}
\put(67.00,00.00){\makebox(0,0)[cc]{$[E_1]$}}
\put(37.00,47.00){\makebox(0,0)[cc]{$[E_0]$}}
\put(53.00,23.00){\makebox(0,0)[cc]{$[B]$}}
\put(07.00,23.00){\makebox(0,0)[cc]{$[A]$}}
\put(30.00,08.00){\makebox(0,0)[cc]{$[H]$}}
\end{picture}
\hspace*{\fill}
\end{minipage}%
\\[1ex]
The classes of the $E_i$ form a basis of $\Cl(S_1)$; 
the associated intersection matrix is
$$
{
\left(\begin{array}{@{}rrr@{\,}}
-1 & 1 & 1 \\ 1 & -1 & 0 \\ 1 & 0 & -1
\end{array}\right)}.
$$
This implies that $\,\id_{\Cl S_1}=
[H]\otimes [E_0] + [A]\otimes[E_1]+ [B]\otimes [E_2]$,
and the coefficients of $E_i$ in $\pDiv'_{\Cox S_1}$
are indeed $\Delta_{E_i}=\ko{0[E_i]}+\Nef(S_1)$.
\\[1ex]
For the second example $S_2$, we obtain the following pictures for the
fan and the class group, respectively. Again, $E_1$ is the exceptional
curve of the first blow-up, $E_2$ of the second blow-up, and $E_0$ is the
strict transform of the line.
\\[1ex]
\begin{minipage}{15cm} 
\unitlength=0.8mm
\definecolor{cblowup}{rgb}{1.0,.388,.278}
\linethickness{0.4pt}
\hspace*{\fill}
\begin{picture}(70.00,70.00)(0,0)
\put(30.00,30.00){\line(1,0){30.00}}
\put(30.00,30.00){\line(1,1){20.00}}
\put(30.00,30.00){\line(1,2){15.00}}
\put(30.00,30.00){\line(0,1){30.00}}
\put(30.00,30.00){\line(-1,-1){20.00}}
\put(55.00,50.00){\makebox(0,0)[cc]{$E_1$}}
\put(49.00,61.00){\makebox(0,0)[cc]{$E_2$}}
\put(35.00,64.00){\makebox(0,0)[cc]{$E_0$}} 
\put(60.00,25.00){\makebox(0,0)[cc]{$A$}}
\put(10.00,17.00){\makebox(0,0)[cc]{$B$}}
\end{picture}
\hspace*{\fill}
\hspace*{\fill}
\begin{picture}(70.00,60.00)(0,-12)
\put(0.00,0.00){\line(1,0){60.00}}
\put(0.00,0.00){\line(2,3){30.00}} 
\put(60.00,0.00){\line(-2,3){30.00}} 
\put(0.00,0.00){\line(4,3){30.00}}
\put(60.00,0.00){\line(-4,3){30.00}}  %
\put(30.00,45.00){\line(0,-1){22.00}}  %
\put(60.00,0.00){\line(-2,1){45.00}}
\put(24.00,18.00){\circle*{3.00}}
\put(15.00,22.50){\circle*{3.00}}
\put(30.00,22.50){\circle*{3.00}}
\put(15.00,22.50){\line(1,0){15.00}}
\put(-7.00,00.00){\makebox(0,0)[cc]{$[E_0]$}}
\put(67.00,00.00){\makebox(0,0)[cc]{$[E_1]$}}
\put(37.00,47.00){\makebox(0,0)[cc]{$[E_2]$}}
\put(34.00,25.00){\makebox(0,0)[lc]{$[B]=[H]$}}
\put(07.00,23.00){\makebox(0,0)[cc]{$[A]$}}
\put(24.00,11.00){\makebox(0,0)[cc]{$[C]$}}
\end{picture}
\hspace*{\fill}
\end{minipage}%
\\[1ex]
Using the basis $\{[E_0], [E_1], [E_2]\}$, the intersection
matrix is as follows:
$$
{
\left(\begin{array}{@{}rrr@{\,}}
-1 & 1 & 0 \\ 1 & -2 & 1 \\ 0 & 1 & -1
\end{array}\right)}
$$
The pull back of the line is $[H]=[B]=[A]+[E_1]$
with $[A]=[E_2]+[E_0]$, $\,[B]=[E_0]+[E_1]+2[E_2]$, and
$[C]:=2[E_0]+[E_1]+2[E_2]=2[A]+[E_1]=[B]+[E_0]$
generating the nef cone $\Nef(S_2)$.
This implies that $\,\id_{\Cl S_2}=
[A]\otimes[E_1] + [C]\otimes [E_2] + [B]\otimes [E_0]$,
and the compact parts of the coefficients of the
$E_i$ in $\pDiv'_{\Cox S_2}$ are 
$\Delta^{\cp}_{E_0}=\ko{0[E_0]}$, but
$$
\textstyle
\Delta^{\cp}_{E_1}=\conv\{0,\;\frac{1}{2}[E_1],\; [E_1]+[E_2]\}
\hspace{0.7em}\mbox{and}\hspace{0.7em}
\Delta^{\cp}_{E_2}=\conv\{0,\;[E_2],\; [E_1]+2[E_2]\}.
$$

The two surfaces are homeomorphic; in fact, there exists a deformation
of $S_2$ to $S_1$.  Thus we can identify respective homology classes
and put them in one picture. The cohomology classes $[H]$, $[E_0]$,
$[E_2]$ and $[A]$ are the same for both surfaces, the class of the
second blow-up we denote by $[E_1]^1$ and $[E_1]^2$, respectively.  To
make the picture transparent the boundaries of $\Eff$ cones, as well as
their division in Zariski chambers, are denoted by dotted line
segments. 

$$
\begin{xy}<40pt,0pt>:
(0,-0.5)*={[E_2]}, (2.5,3.5)*={[E_0]},
(4,-0.5)*={[E_1]^1}, (8,-0.5)*={[E_1]^2},
(0.5,1.5)*={[A]}, (2,0.5)*={[H]}, 
(0,0)*={}="e2" ; (8,0)*={}="e12" **@{.},
(2,3)*={}="e0" ; (4,0)*={}="e11" **@{.},
"e0" ; "e12" **@{.},
"e0" ; "e2" **@{.},
(1,1.5)*={\bullet}="a" ; "e11" **@{.}, 
(3,1.5)*={\bullet}="b"; "e2" **@{.}, "b" ; "a" **@{-},
"a" ; "e12" **@{.},
(2,1)*={\bullet}="h" ; "e12" **@{.}, 
"h" ; "b" **@{-},
"h" ; "a" **@{-},
(2.4,1.2)*={\bullet}="c" ; "a" **@{-},
"c" ; "e0" **@{.}
\end{xy}
$$

This picture describes a typical situation: the effective cone, as the
function of a deformation is upper semicontinuous, that is
$\Eff(S_2)\supset\Eff(S_1)$ while the nef or movable cone is lower
semicontinuous, that is $\Mov(S_2)\subset\Mov(S_1)$.

\newcommand{\etalchar}[1]{$^{#1}$}

\end{document}